\newtheorem{theorem}{Theorem}[section]
\newtheorem{lemma}[theorem]{Lemma}
\newtheorem{corollary}[theorem]{Corollary}
\newtheorem{proposition}[theorem]{Proposition}
\theoremstyle{definition}
\newtheorem{definition}[theorem]{Definition}
\newcommand{\Dcal}{\ensuremath{\mathcal{D}}}
\newcommand{\Xcal}{\ensuremath{\mathcal{X}}}
\newcommand{\Tcal}{\ensuremath{\mathcal{T}}}
\newcommand{\Fcal}{\ensuremath{\mathcal{F}}}
\newcommand{\Acal}{\ensuremath{\mathcal{A}}}
\newcommand{\Bcal}{\ensuremath{\mathcal{B}}}
\newcommand{\Ccal}{\ensuremath{\mathcal{C}}}
\newcommand{\Lbb}{\ensuremath{\mathbb{L}}}
\newcommand{\Zbb}{\ensuremath{\mathbb{Z}}}
\theoremstyle{remark}
\newtheorem{remark}[theorem]{Remark}
\numberwithin{equation}{section}
\begin{document}
\begin{title}{Perverse coherent t-structures through torsion theories}
\end{title}
\begin{author}{Jorge Vit\'oria}\end{author}\thanks{Most of this work was developed at the University of Warwick and supported by FCT (Portugal), research grant SFRH/BD/28268/2006. Later, this project was also supported by DFG (SPP 1388) in Stuttgart and by SFB 701 in Bielefeld. The author would like to thank Steffen Koenig, Qunhua Liu, Dmitriy Rumynin, Jan {\v S}{\v t}ov{\'{\i}}{\v{c}}ek and the anonymous referee for valuable comments on the previous versions of this paper.}

\address{Fakult\"at f\"ur Mathematik, Universit\"at Bielefeld, D-33501 Bielefeld, Germany \newline\indent email:  jvitoria@math.uni-bielefeld.de}
\begin{abstract}
Bezrukavnikov, later together with Arinkin, recovered Deligne's work defining perverse t-structures in the derived category of coherent sheaves on a projective scheme. We prove that these t-structures can be obtained through tilting with respect to torsion theories, as in the work of Happel, Reiten and Smal\o. This approach allows us to define, in the quasi-coherent setting, similar perverse t-structures for certain noncommutative projective planes.
\end{abstract}

\maketitle
\begin{section}{Introduction} 
A t-structure in a triangulated category $\Dcal$ (\cite{BBD}) is a pair of strict full subcategories, $(\Dcal^{\leq 0}, \Dcal^{\geq 0})$, such that, for $\Dcal^{\leq n} := \Dcal^{\leq 0}[-n]$ and $\Dcal^{\geq n} := \Dcal^{\geq 0}[-n]$ ($n\in \mathbb{Z}$),
\begin{enumerate}
\item ${\rm Hom}(X,Y) = 0, \forall X \in \Dcal^{\leq 0},\forall Y \in \Dcal^{\geq 1}$;
\item $\Dcal^{\leq 0} \subseteq \Dcal^{\leq 1}$;
\item For all $X \in \mathcal{D}$, there are $A\in \Dcal^{\leq 0}$, $B\in \Dcal^{\geq 1}$ and a triangle
\begin{equation}\nonumber
A\longrightarrow X \longrightarrow B \longrightarrow A[1].
\end{equation}
\end{enumerate}
The intersection $\Dcal^{\leq 0}\cap \Dcal^{\geq 0}$ is an abelian category (\cite{BBD}), called the heart. Also, it is well known (\cite{KV}) that $\Dcal^{\leq 0}$, called the aisle, determines the t-structure by setting $\Dcal^{\geq 0}=(\Dcal^{\leq 0})^{\perp}[1]$. A t-structure $(\Dcal^{\leq 0},\Dcal^{\geq 0})$ has associated truncation functors $\tau^{\leq i}:\Dcal\rightarrow \Dcal^{\leq i}$, $\tau^{\geq i}:\Dcal\rightarrow \Dcal^{\leq i}$ and cohomological functors $H^i: \Dcal\rightarrow \Dcal^{\leq 0}\cap\Dcal^{\geq 0}$, for all $i\in\Zbb$ (see \cite{BBD} for details). If $\Acal$ is an abelian category, its derived category $\Dcal(\Acal)$ has a standard t-structure, denoted throughout by $(\Dcal_0^{\leq 0},\Dcal^{\geq 0}_0)$, defined by
\begin{equation}\nonumber
\Dcal^{\leq 0}_0:=\{X\in\Dcal: H_0^i(X)=0, \forall i>0\},
\end{equation}
\begin{equation}\nonumber
\Dcal^{\geq 0}_0:=\{X\in\Dcal: H_0^i(X)=0, \forall i<0\},
\end{equation}
where $H_0^i$ is the usual complex cohomology functor. We denote the associated truncation functors by $\tau_0^{\leq i}$ and $\tau_0^{\geq i}$ and the associated cohomological functor is precisely the complex cohomology functor $H_0^i$, for all $i\in\Zbb$. The standard t-structure restricts to the bounded derived category $\Dcal^b(\Acal)$ and we shall use the same notations for the restriction, when appropriate.

Let $\mathbb{K}$ be an algebraically closed field. For a scheme $X$ over $\mathbb{K}$, Arinkin and Bezrukavnikov (\cite{ArBe},\cite{Be}) constructed perverse coherent t-structures in $\Dcal^b(coh(X))$ as follows.  Let $X^{top}$ denote the set of generic points of all closed irreducible subschemes of $X$. A perversity is a map $p:X^{top} \longrightarrow \mathbb{Z}$ satisfying 
\begin{equation}\label{def perv}
y \in \bar{x} \Rightarrow p(y) \geq p(x)\geq p(y) - (dim(x) - dim(y)).
\end{equation}
Note that the image of $p$ has at most $dim(X)+1$ elements. The perverse coherent t-structure associated with $p$ (\cite{ArBe}, \cite{Be}) is defined by the aisle 
\begin{equation}\nonumber
\Dcal^{p,\leq 0}=\left\{F^{\bullet}\in \Dcal^{b}(coh(X)): \forall x \in X^{top},\  Li_{x}^{*}(F^{\bullet}) \in \Dcal_0^{\leq p(x)}(O_{\{x\}}\mbox{-}mod)\right\},
\end{equation}
where  $Li_{x}^{*}$ is the left derived functor of the pullback by  the inclusion of schemes $i_{x}:\left\{x\right\}\longrightarrow X$. In our notation, we identify modules over the residue field $k(x)$ with quasi-coherent sheaves over $\{x\}$. Still, we choose to use the notation $O_{\{x\}}\mbox{-}mod$ for coherent sheaves over $\{x\}$ to be consistent with the notation in \cite{Be}. 
Our main theorem gives an alternative description of this aisle.
\\

\noindent\textbf{Theorem} (Theorem \ref{main perverse})
\textit{Let $X$ be a smooth projective scheme over $\mathbb{K}$, $R=\Gamma_*(X)$ its homogeneous coordinate ring and $p$ a perversity on $X$. Suppose that $R$ is a commutative connected, noetherian, positively graded $\mathbb{K}$-algebra generated in degree 1. Let $\Tcal_i$ denote the torsion class cogenerated  in $Tails(R)$ by $\pi E_i$, where $$E_i=\prod\limits_{\left\{x\in X^{top}:p(x)\leq i\right\}}E^g(R/I_x),$$ with $I_x$ standing for the defining ideal of $x\in X^{top}$ in $R$. Then we have:
\begin{equation}\nonumber
\Dcal^{p,\leq 0} = \left\{ F^{\bullet}\in \Dcal^b(Tails(R)): H_0^i(F^{\bullet})\in \Tcal_j, \forall i>j\right\}\cap \Dcal^b(tails(R)).\\
\end{equation}}

We clarify some notation. Denoting by $O_X$ the structure sheaf of $X$ and by $\Gamma$ the functor of global sections, the homogeneous coordinate ring $R$ is defined by
\begin{equation}\nonumber
R=\Gamma_*(X):=\bigoplus\limits_{n\in\mathbb{Z}} \Gamma(X,O_X(n)).
\end{equation} 
Throughout, $R$ will be assumed to be noetherian. We denote the injective envelope of a graded module $M$ in the category of graded (right) $R$-modules, $Gr(R)$, by $E^g(M)$. The category $Tails(R)$ is the quotient $Gr(R)/Tors(R)$ (we denote the projection functor to this quotient by $\pi$) where $Tors(R)$ is the full subcategory of modules $M$ in $Gr(R)$ such that for all $x$ in $M$ there is $N\geq 0$ with $xR_j=0$, for all $j>N$. 
This category is equivalent to $Qcoh(X)$, the category of quasi-coherent sheaves over $X$, as shown by Serre in \cite{FAC}.  When written in the lower case,  $tails(R) = gr(R)/tors(R)$ denotes the subcategory of finitely generated objects in $Tails(R)$, thus being equivalent to $coh(X)$.  Throughout we will use these equivalences without mention. We will show in section 3 that, for some rings $R$,
\begin{equation}\nonumber
\left\{ X^{\bullet}\in \Dcal^b(Tails(R)): H_0^i(X^{\bullet})\in \Tcal_j, \forall i>j\right\}
\end{equation}
is an aisle of $\Dcal^b(Tails(R))$, obtained through a suitable iteration of tilting with respect to torsion theories (as in the work of Happel, Reiten and Smal\o, \cite{HRS}), for some torsion classes $\{\Tcal_a,...,\Tcal_{a+s}\}$ (see also \cite{AJS}, \cite{Ka} and \cite{Sta} for similar constructions). 

The categories $Tails(R)$ are also defined as above for noncommutative rings $R$, providing a framework to noncommutative projective geometry (\cite{AZ}). Our theorem motivates similar constructions of t-structures in this new setting (see section 5). Given a (noncommutative) graded $\mathbb{K}$-algebra $R$, the noncommutative projective scheme associated with $R$ can be thought of as an abstract space $Proj(R)$ whose category of \textit{quasi-coherent sheaves} (respectively, \textit{coherent sheaves}) is the category $Tails(R)$ (respectively, $tails(R)$) with structure sheaf $\pi R$. Analogously to the commutative case, $\pi$ admits a right adjoint $\Gamma_*$. Artin and Schelter defined in \cite{AS} the class of algebras whose categories of tails play the role of coherent sheaves over noncommutative projective planes. These, called Artin-Schelter (AS for short) regular algebras of dimension $3$, are algebras of global dimension 3, finite Gelfand-Kirillov dimension (in fact equal to 3) and Gorenstein. These algebras have been classified (\cite{AS},\cite{ATV}) by associating to each one a triple $(E,\sigma,L)$, where $E$ is a scheme, $\sigma$ is an automorphism of $E$ and $L$ is an invertible sheaf on $E$. In section 5 we focus on AS-regular algebras of dimension 3 with 3 generators such that  $E$ is a divisor of degree 3 in $\mathbb{P}^2$, $\sigma$ is an automorphism of finite order and $L$ is the restriction of $O_{\mathbb{P}^2}(1)$. These are noetherian algebras and finite over their centres (\cite{ATV2}) - hence fully bounded noetherian. In this setting, we provide an example of a new construction of perverse quasi-coherent t-structures. 

The paper is outlined as follows. Section 2 presents some basics on torsion theories for categories of graded modules. In section 3 we show how to obtain a t-structure by adequately iterating a result of \cite{HRS}, using certain torsion theories of an AB4 abelian category. Section 4 shows how torsion theories come into play when describing perverse coherent t-structures and section 5 applies section 3 to define perverse quasi-coherent t-structures on some noncommutative projective planes.
\end{section}

\begin{section}{Torsion theories for graded modules}
Let $R$ be a noetherian graded ring (not necessarily commutative). The category $Gr(R)$ is a Grothendieck category, admitting injective envelopes which, for a graded module $M$, we denote by $E^g(M)$. The homomorphisms between modules $M$ and $N$ in $Gr(R)$ ($R$-linear,  grading preserving) are denoted by Hom$_{Gr(R)}(M,N)$. The subset of homogeneous elements of $M$ is denoted by $h(M)$. It is clear that $M$ is generated by $h(M)$. Also, for a prime ideal $P$, define $C^g(P)=C(P)\cap h(R)$, where $C(P)$ is the set of regular elements modulo $P$, i.e., the set of elements $x$ of $R$ such that $x+P$ is neither left nor right zero divisor in $R/P$. If $R$ is commutative, then $C(P)=R\setminus P$. The following remark proves to be useful.

\begin{remark}\label{rem}
Given a connected positively graded ring $R$ generated in degree one and a homogeneous prime ideal $P\neq R_+:=\bigoplus_{i\geq 1} R_i$, we have $P_n \neq R_n$ for all $n>1$. Recall that an ideal $P$ of a ring $R$ is prime if and only if for all $x,y\in R$, whenever $xRy\subset P$, either $x$ or $y$ must belong to $P$. Suppose that there is $n_0>1$ such that $P_{n_0}=R_{n_0}$. Note that, since the ring is generated in degree one, we have $P_n=R_n$ for all $n>n_0$. Let $x_1$ be an element in $R_1\setminus P_1$. Since $P$ is prime, there is $r_1\in R$ such that $x_2=x_1r_1x_1 \notin P$. Now, deg$(x_2)\geq 2$ since $R$ is positively graded. Thus we can inductively construct a sequence of elements $(x_n)_{n\in\mathbb{N}}$ none of them lying in $P$ and such that deg$(x_n)>$deg$(x_{n-1})$, yielding a contradiction with the assumption that $P_n=R_n$ for all $n>n_0$.
\end{remark}

We recall the definition of torsion theory.
\begin{definition}
Let $\mathcal{A}$ be an abelian category. A pair of full subcategories $(\mathcal{T}$,$\mathcal{F})$ is said to be a torsion theory if:
\begin{enumerate}
\item Hom$(T,F)=0$, for all $T\in\mathcal{T}$ and $F\in\mathcal{F}$;
\item For all $M\in\mathcal{A}$ there is an exact sequence 
\begin{equation}\nonumber
0\longrightarrow \tau(M) \longrightarrow M\longrightarrow M/\tau{M}\longrightarrow 0,
\end{equation}
where $\tau(M)\in\mathcal{T}$ and $M/\tau(M)\in\mathcal{F}$.
\end{enumerate}
We say that $(\Tcal,\Fcal)$ is a hereditary torsion theory if $\Tcal$ is closed under subobjects. 
\end{definition}

We are particularly interested in (hereditary) torsion theories defined as follows.

\begin{definition}
A torsion theory (or its torsion class) in $Gr(R)$ is said to be cogenerated by an injective object $E$ if the torsion objects are precisely those $M$ satisfying ${\rm Hom}_{Gr(R)}(M,E)=0$. 
\end{definition}

Since $R$ is noetherian, $gr(R)$ is closed under taking subobjects. Thus, torsion theories in $Gr(R)$ restrict to torsion theories in $gr(R)$. For this it is enough to observe that, given a module $M$ in $gr(R)$ and $\tau$ the torsion radical functor induced by a torsion theory in $Gr(R)$, both $\tau(M)$ and $M/\tau(M)$ also lie in $gr(R)$. 

The following lemma proves a useful criterion for graded modules to be torsion with respect to the torsion theory cogenerated by an injective object. The arguments of the proof mimic the ungraded case (see \cite{LM}, lemma 2.5).  

\begin{lemma}\label{lemma homg}
Given graded modules $T$ and $F$ over a graded ring $R$, the following conditions are equivalent:
\begin{enumerate}
\item ${\rm Hom}_{Gr(R)}(T,E^g(F))=0$;
\item $\forall t\in h(T)$, $\forall f\in h(F)\setminus 0$, ${\rm deg}(f)={\rm deg}(t)$, $\exists r\in h(R)$: $tr=0\wedge fr\neq 0$.
\end{enumerate}
\end{lemma}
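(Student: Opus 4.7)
The plan is to establish the two implications separately, exploiting the two defining properties of $E^g(F)$ in $Gr(R)$: injectivity (extend maps) and essentiality (nonzero graded submodules meet $F$).

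For (1)$\Rightarrow$(2), I would argue by contrapositive. Suppose (2) fails: there exist $t\in h(T)$ and $f\in h(F)\setminus 0$ with $\deg(t)=\deg(f)$ such that every homogeneous $r\in h(R)$ with $tr=0$ also satisfies $fr=0$. The first step is to promote this implication to arbitrary $r\in R$: decomposing $r=\sum r_i$ into homogeneous components, $tr=0$ forces each $tr_i=0$ (the summands have distinct degrees), hence each $fr_i=0$ by hypothesis, so $fr=0$. Therefore the assignment $tr\mapsto fr$ defines an $R$-linear map $\phi\colon tR\to F$, and since $\deg(t)=\deg(f)$ this $\phi$ preserves degree, i.e.\ $\phi\in\mathrm{Hom}_{Gr(R)}(tR,F)$. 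Composing with the inclusion $F\hookrightarrow E^g(F)$ and using the injectivity of $E^g(F)$ in $Gr(R)$ along the inclusion $tR\hookrightarrow T$, I get a nonzero graded map $\tilde\phi\colon T\to E^g(F)$ (nonzero because $\tilde\phi(t)=f\neq 0$), contradicting (1).

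For (2)$\Rightarrow$(1), suppose $\psi\colon T\to E^g(F)$ is a nonzero graded homomorphism. Pick $t\in h(T)$ with $x:=\psi(t)\neq 0$; then $x\in h(E^g(F))$ and $xR$ is a nonzero graded submodule of $E^g(F)$. Because $E^g(F)$ is a graded essential extension of $F$, the intersection $xR\cap F$ is a nonzero graded submodule, so it contains a nonzero homogeneous element of the form $f=xr_0$ with $r_0\in h(R)$. Setting $t':=tr_0\in h(T)$ gives $\psi(t')=f\in h(F)\setminus 0$ and $\deg(t')=\deg(f)$. Applying (2) to the pair $(t',f)$ produces $r\in h(R)$ with $t'r=0$ and $fr\neq 0$; but then $0=\psi(t'r)=\psi(t')r=fr$, a contradiction.

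The main obstacle, if any, is the bookkeeping in the first direction: one must verify that the hypothesis stated only for homogeneous $r$ suffices to make $\phi$ well-defined on all of $tR$, and that $\phi$ lands in $F$ at degree $\deg(t)+\deg(r)$; both issues are handled by the degree-matching $\deg(t)=\deg(f)$ together with the graded decomposition argument. Everything else is a direct invocation of the injective-envelope formalism in $Gr(R)$ analogous to the ungraded case treated in \cite{LM}.
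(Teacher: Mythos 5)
Your proposal is correct and follows essentially the same route as the paper: both directions hinge on the same interplay of graded essentiality (to pull a nonzero homogeneous image into $F$ via right multiplication) and injectivity (to extend the degree-preserving map $tR\to F$ to all of $T$), with your treatment merely spelling out the homogeneous-component bookkeeping that the paper compresses into the remark $\langle h(R)\rangle = R$.
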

\begin{proof}
Suppose ${\rm Hom}_{Gr(R)} (T,E^g(F)) \neq 0$. Let $\alpha$ be one of its nonzero elements. Choose $u\in h(T)$ such that $\alpha (u)\neq 0$. Now, $F$ is a graded essential submodule of $E^g(F)$, i.e., given any nontrivial graded submodule of $E^g(F)$, its intersection with $F$ is nontrivial. Hence there is $s\in h(R)$ such that $0\neq \alpha(u)s= \alpha(us) \in F$. If we choose $t=us$ and $f=\alpha(us)$, they are homogeneous of the same degree and clearly, given $r\in R$, if $tr=0$ then $fr=0$.

Suppose now that (2) is false, i.e., there are $t\in T$ and $f\in F\setminus\left\{0\right\}$ homogeneous of the same degree such that for all $r\in h(R)$, if $tr=0$ then $fr=0$. Then, there is a well defined nonzero graded homomorphism
\begin{equation}\nonumber
tR\longrightarrow F,\ tr\mapsto fr
\end{equation}
since $\left\langle h(R)\right\rangle = R$. Since $E^g(F)$ is an injective object in the category of graded modules, we can find a nonzero graded homomorphism from $T$ to $E^g(F)$.
\end{proof}

The following corollary shows how to reformulate a statement about graded localisation in terms of torsion.

\begin{corollary}\label{corollary deg zero}
Let $R$ be a commutative graded ring, $P$ a homogeneous prime ideal in $R$ and $S=h(R\setminus P)$. Given $M$ a graded $R$-module then $(S^{-1}M)_0 = 0$ if and only if ${\rm Hom}_{Gr(R)}(M,E^g(R/P))=0$.
\end{corollary}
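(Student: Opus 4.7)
The plan is to use Lemma \ref{lemma homg} directly, with $T = M$ and $F = R/P$, to translate the vanishing of ${\rm Hom}_{Gr(R)}(M, E^g(R/P))$ into an elementwise criterion, and then check that this criterion is precisely the definition of $(S^{-1}M)_0 = 0$. The key auxiliary fact is that, since $R$ is commutative and $P$ is prime, the condition "$fr \neq 0$ in $R/P$" for $f, r$ homogeneous with $f \notin P$ reduces to $r \notin P$, i.e., $r \in S$.

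First I would rewrite the Hom vanishing using Lemma \ref{lemma homg}: it is equivalent to requiring that for every $m \in h(M)$ and every nonzero homogeneous $\bar f \in h(R/P)$ with $\deg \bar f = \deg m$, there exists $r \in h(R)$ satisfying $mr = 0$ and $\bar f r \neq 0$. Lifting $\bar f$ to a homogeneous $f \in h(R) \setminus P$ of the same degree and using primality in the commutative ring $R$, the condition $\bar f r \neq 0$ is equivalent to $fr \notin P$, which, given $f \notin P$, is equivalent to $r \notin P$, i.e., $r \in S$. Thus the Hom condition becomes: for every $m \in h(M)$ and every $f \in h(R) \setminus P$ with $\deg f = \deg m$, there exists $r \in S$ with $mr = 0$.

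Next I would unwind the other side. A homogeneous element of degree zero in $S^{-1}M$ has the form $m/s$ with $m \in h(M)$, $s \in S$ and $\deg m = \deg s$, and it vanishes if and only if there exists $r \in S$ with $rm = 0$. So $(S^{-1}M)_0 = 0$ is exactly the statement: for every $m \in h(M)$ and every $s \in h(R) \setminus P = S$ with $\deg s = \deg m$, there is $r \in S$ with $rm = 0$. Since $h(R) \setminus P = S$, this matches the reformulation of the Hom vanishing from the previous paragraph, and the two conditions coincide.

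I do not foresee a real obstacle here: the entire content of the corollary is already packaged in Lemma \ref{lemma homg}, and the remaining work is the purely formal observation that primality of $P$ collapses the two-variable condition ($m$ and $\bar f$) of the Lemma into the one-variable condition ($m/s$) of localisation. The only subtlety worth flagging is that lifting $\bar f$ to $f \in h(R)\setminus P$ requires the quotient $R \to R/P$ to preserve the grading and surject on each graded component, which holds since $P$ is homogeneous.
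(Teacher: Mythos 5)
Your proof is correct and takes essentially the same route as the paper: both rest on Lemma \ref{lemma homg} with $T=M$ and $F=R/P$, together with the observation that condition (2) of that lemma is precisely the statement $(S^{-1}M)_0=0$. You merely spell out the verification that the paper leaves implicit, notably the use of primality of $P$ (and commutativity) to convert the condition $\bar{f}r\neq 0$ in $R/P$ into $r\in S$.
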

\begin{proof}
This follows from the fact $(S^{-1}M)_0 = 0$ is equivalent, by definition of graded localisation, to condition (2) of the above lemma with $T=M$ and $F=R/P$.
\end{proof}

We will now look at rigid torsion theories (\cite{NVO}). We shall consider the following subset of Hom$_R(M,N)$: $\overline{{\rm Hom}}(M,N):=\bigoplus\limits_{i\in\mathbb{Z}}{\rm Hom}_{Gr(R)}(M,N(i))$.  

\begin{definition}
We say that a torsion theory in $Gr(R)$ is \textit{rigid} if the class of torsion modules (equivalently, the class of torsion-free modules) is closed under shifts of the grading. The \textit{rigid torsion theory cogenerated by an injective object $E$ in $Gr(R)$} is defined such that a module $M$ is torsion if $\overline{{\rm Hom}}(M,E)=0$.
\end{definition}

\begin{remark}\label{lemma big hom}
Observe that, given graded modules $T$ and $F$ over a graded ring $R$,  $\overline{{\rm Hom}}(T,E^g(F))=0$ if and only if Hom$_{Gr(R)}(T(j),E^g(F))=0$ for all $j\in\mathbb{Z}$. This allows us to get analogues of lemma \ref{lemma homg} and corollary \ref{corollary deg zero} as follows: 
\begin{enumerate}

\item The following conditions are equivalent: \begin{itemize} \item$\overline{{\rm Hom}}(T,E^g(F))=0$ \item $\forall t\in h(T)$, $\forall f\in h(F)\setminus 0$, $\exists r\in h(R)$ such that $tr=0$ and $fr\neq 0$.\end{itemize}

\item If $R$ is commutative, $P$ a homogeneous prime ideal in $R$, $S=h(R\setminus P)$ and $M$ a graded $R$-module, then we have that $S^{-1}M = 0$ if and only if $\overline{{\rm Hom}}(M,E^g(R/P))=0$.

\end{enumerate}
\end{remark}

It is, in fact, possible to get a more general statement, including some noncommutative rings, by comparing this rigid torsion theory with the torsion theory associated to a multiplicative set. For a homogeneous right ideal $J$ of a graded ring $R$ we use the notation $J\triangleleft_{rg} R$ and, given $r\in R$, we define a right ideal
\begin{equation}\nonumber
r^{-1}J:=\left\{a\in R: ra\in J\right\}.
\end{equation}
Recall the following result (\cite{NVO}, Proposition A.II.9.11). 

\begin{proposition}\label{proposition gabriel filter}
Let $R$ be a graded ring, $S$ a multiplicative subset contained in $h(R)$. Then the class of modules $M$ such that there is $J\in\mathbb{L}_S$ with $MJ=0$, where
\begin{equation}\nonumber
\mathbb{L}_S = \left\{J\triangleleft_{rg} R: r^{-1}J\cap S\neq\varnothing,\ \forall r\in h(R)\right\},
\end{equation}
is a torsion class for a rigid torsion theory in $Gr(R)$.
\end{proposition}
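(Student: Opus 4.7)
My plan is to reduce the claim to the classical correspondence between right Gabriel filters on a ring and hereditary torsion theories in its module category, adapted here to the graded setting. Concretely, I would verify that $\mathbb{L}_S$ is a Gabriel filter of right homogeneous ideals; the corresponding hereditary torsion class then consists of those $M$ for which each (homogeneous) element's annihilator belongs to $\mathbb{L}_S$, which under the standing Noetherian hypothesis is equivalent, on finitely generated pieces, to the condition $MJ=0$ for some $J\in\mathbb{L}_S$ displayed in the statement. Rigidity will be a formal consequence of the fact that every ideal in $\mathbb{L}_S$ is a homogeneous right ideal.

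The verification of the Gabriel filter axioms splits into routine checks. That $R\in\mathbb{L}_S$ and that $\mathbb{L}_S$ is closed under supersets among right homogeneous ideals are immediate from the definition. Closure under binary intersections relies on the multiplicativity of $S$: given $J_1,J_2\in\mathbb{L}_S$ and $r\in h(R)$, first pick $s_1\in S$ with $rs_1\in J_1$ and then $s_2\in S$ with $(rs_1)s_2\in J_2$; since $J_1$ is a right ideal, $rs_1 s_2\in J_1\cap J_2$, and $s_1 s_2\in S$. The invariance $r^{-1}J\in\mathbb{L}_S$ for $r\in h(R)$ follows from the identity $s^{-1}(r^{-1}J)=(rs)^{-1}J$ together with $rs\in h(R)$. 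Finally, the transitivity axiom: given a right homogeneous ideal $J$ and some $K\in\mathbb{L}_S$ such that $k^{-1}J\in\mathbb{L}_S$ for every $k\in h(K)$, for any $r\in h(R)$ choose $s_1\in S$ with $rs_1\in K$ (still homogeneous), and then use $(rs_1)^{-1}J\in\mathbb{L}_S$ to produce $s_2\in S$ with $rs_1 s_2\in J$, exhibiting $s_1 s_2\in r^{-1}J\cap S$.

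Rigidity is then immediate: since all members of $\mathbb{L}_S$ are homogeneous and the relation $MJ=0$ is insensitive to shifts of the grading on $M$, the torsion class is closed under the functors $(-)(n)$ for $n\in\mathbb{Z}$. The step I expect to require the most care is not any single axiom in isolation but rather ensuring that at every stage the witnesses being produced remain homogeneous, so that the graded versions of $r^{-1}J$ and $J_1\cap J_2$ behave as required; everything ultimately rests on $h(R)$ being closed under multiplication and $S\subseteq h(R)$ being multiplicative, but keeping left and right multiplication straight in the noncommutative setting is where slips most easily occur.
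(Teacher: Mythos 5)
The paper does not prove this proposition: it is explicitly ``recalled'' from Nastasescu--Van Oystaeyen \cite{NVO}, so there is no in-text proof to compare against. Your reconstruction is the standard one, reducing the claim to the correspondence between (graded) Gabriel filters and hereditary (rigid) torsion theories, and your verification of the filter axioms is correct: the identity $a^{-1}(r^{-1}J)=(ra)^{-1}J$ for $r,a\in h(R)$, the two-step choice of $s_1,s_2\in S$ for closure under intersections, and the application of transitivity via $(rs_1)^{-1}J\in\mathbb{L}_S$ (using that $1\in h(R)$ when extracting the element of $S$) are all sound, and homogeneity is preserved throughout because $h(R)$ is multiplicatively closed and $r^{-1}J$ is again homogeneous when $r$ and $J$ are.

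One point worth sharpening, which you gesture at but leave slightly loose: the class $\{M:\exists J\in\mathbb{L}_S,\ MJ=0\}$ as literally displayed need not be closed under arbitrary direct sums, since $\mathbb{L}_S$ need not be closed under infinite intersections. The hereditary torsion class attached to the Gabriel filter is $\{M:\operatorname{Ann}(m)\in\mathbb{L}_S\ \text{for all}\ m\in h(M)\}$, and the $MJ=0$ condition characterizes only its cyclic (or finitely generated) members; your appeal to ``the standing Noetherian hypothesis'' and ``finitely generated pieces'' does not by itself repair this for an infinitely generated $M\in Gr(R)$. This is, however, an imprecision already present in the statement as quoted, not a flaw introduced by your argument, and the substantive content (that $\mathbb{L}_S$ is a graded Gabriel filter defining a rigid hereditary torsion theory) is established correctly.
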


$\mathbb{L}_S$ as above is said to be a graded Gabriel filter for that torsion theory. If $S=C^g(P)$ for some homogeneous prime ideal $P$, then we denote the filter by $\mathbb{L}_P$. The rigid torsion theory associated to an injective graded module $E$ also has an associated graded Gabriel filter given by:
\begin{equation}\nonumber
\mathbb{L}_E^r=\left\{J\triangleleft_{rg} R: \overline{{\rm Hom}}(R/J,E)=0\right\}.
\end{equation}

In fact, hereditary rigid torsion theories are in bijection with graded Gabriel filters (\cite{NVO}, Lemma A.II.9.4). Thus the graded Gabriel filter determines the torsion theory and vice-versa. The following two supporting lemmas will be useful in proving the main theorem of this section.

\begin{lemma}\label{there is a homog}
Let $R=\oplus_{i\geq 0} R_i$ be a noetherian graded ring, $P$ a homogeneous prime ideal and $J$ a homogeneous right ideal of $R$. If $J\cap C(P)\neq \varnothing$ then $J\cap C^g(P)\neq \varnothing$.
\end{lemma}
\begin{proof}
Recall that, in a prime noetherian ring, an element is left regular if and only if it is right regular (see \cite{Her} for details).  
Therefore we can regard $C(P)$ as the set of elements $x\in R$ such that $x+P$ is right regular in $R/P$. Let $c\in J\cap C(P)$ and consider its homogeneous decomposition in $J$: $c=c_{i_1}+c_{i_2}+...+c_{i_n}$ where $c_{i_j}\in J\setminus\left\{0\right\}\cap R_{i_j}$. If $c_{i_1}\in C(P)$, we are done. If not, by definition, there is $r_1\in R\setminus P$ such that $c_{i_1}r_1\in P$. Moreover, the choice of $r_1$ can be made in $h(R\setminus P)$, since $P$ is a homogeneous ideal. %decompose cr_1...
Clearly $cr_1+P$ is right regular in $R/P$ and, thus, $c^{(1)}:=cr_1-c_{i_1}r_1\notin P$. We iterate this argument by looking at the first homogeneous component of $c^{(1)}$ (which is $c_{i_2}r_1$). Assume now that this $n$-step iteration does not yield a homogeneous element in $J\cap C(P)$. Then, this argument gives a sequence $r_1,..., r_n$ of elements in $R$ such that $cr_1...r_n\in P$, which is a contradiction to $c$ being regular modulo $P$.
\end{proof}

\begin{lemma}\label{filter char}
Let $R$ be a positively graded noetherian ring, $J$ a right ideal of $R$ and $M$ a graded right $R$-module. Then $J$ lies in $\Lbb_{E^g(M)}^r$ if and only if $m(x^{-1}J)\neq 0$ for all $m$ in $h(M)\setminus\left\{0\right\}$ and $x$ in $h(R)$.
\end{lemma}
\begin{proof}
Note that $J\in \Lbb_{E^g(M)}^r$ if and only if, for every cyclic graded submodule $C$ of $R/J$, $\overline{{\rm Hom}}(C,E^g(M))=0$. Now, it is easy to see that the graded cyclic submodule generated by $x+J$, for some $x\in h(R)$, is isomorphic to $R/x^{-1}J$. Now, of course, $\overline{{\rm Hom}}(R/x^{-1}J,M)=0$ if and only if, for all $m\in h(M)\setminus\left\{0\right\}$, $m(x^{-1}J)\neq 0$.
\end{proof}

The following theorem is a graded version of the main result in \cite{LM}.

\begin{theorem}\label{theorem two torsions}
Let $P$ be a homogeneous prime ideal of a graded ring $R$ and $R/P$ noetherian. Let $M$ be a graded right $R$-module. Then $M$ is torsion with respect to the rigid torsion theory associated with $C^g(P)$ if and only if $M$ is torsion with respect to the rigid torsion theory associated to $E^g(R/P)$.
\end{theorem}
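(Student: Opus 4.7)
The plan is to prove the theorem by showing that the two associated graded Gabriel filters coincide; by the bijection recalled just before the statement between hereditary rigid torsion theories and graded Gabriel filters, this is equivalent to the desired equality of torsion theories. Concretely, writing $L^r=L^r_{E^g(R/P)}$, it suffices to check that a homogeneous right ideal $J$ of $R$ lies in $\mathbb{L}_P$ if and only if $\overline{\textrm{Hom}}(R/J,E^g(R/P))=0$.

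For the inclusion $\mathbb{L}_P\subseteq L^r$, I would apply Lemma \ref{lemma big hom} directly. Given $J\in\mathbb{L}_P$, a homogeneous $t=u+J\in R/J$ (with $u\in h(R)$) and a nonzero $\bar{f}\in h(R/P)$, the defining property of $\mathbb{L}_P$ produces $s\in u^{-1}J\cap C^g(P)$: the relation $us\in J$ gives $ts=0$, and $s$ being regular modulo $P$ forces $\bar{f}\bar{s}\neq 0$ in $R/P$. These are exactly the data required by Lemma \ref{lemma big hom}, so $R/J$ is rigid-torsion with respect to $E^g(R/P)$.

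For the converse $L^r\subseteq\mathbb{L}_P$ I would proceed in the same spirit, but the conclusion is less immediate. Fix $J\in L^r$ and $u\in h(R)$, and apply Lemma \ref{lemma big hom} to $R/J$ with $t=u+J$: for every nonzero $\bar{f}\in h(R/P)$ there exists $x\in u^{-1}J\cap h(R)$ such that $\bar{f}\bar{x}\neq 0$ in $R/P$. Writing $\overline{u^{-1}J}:=(u^{-1}J+P)/P$, this says that no nonzero homogeneous element of $R/P$ left-annihilates $\overline{u^{-1}J}$, or equivalently, the (necessarily homogeneous) left annihilator of $\overline{u^{-1}J}$ in $R/P$ vanishes. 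To conclude, I must upgrade this to the existence of a homogeneous regular element of $R/P$ inside $\overline{u^{-1}J}$, since then any homogeneous lift to $R$ will belong to $u^{-1}J\cap C^g(P)$ and so exhibit $J\in\mathbb{L}_P$.

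This final step is where the hypothesis that $R/P$ be right Noetherian is used decisively, and it constitutes the main obstacle of the proof. Since $R/P$ is then a graded prime right Goldie ring, the graded analogue of Goldie's theorem (as developed in \cite{NVO}) applies: a homogeneous right ideal of $R/P$ with zero left annihilator is graded essential and therefore contains a homogeneous regular element, i.e.\ an element of $C^g(0)_{R/P}$, which lifts to an element of $C^g(P)$ in $R$. Applied to $\overline{u^{-1}J}$, this produces the required $s\in u^{-1}J\cap C^g(P)$ and finishes the argument.
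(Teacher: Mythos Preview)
Your proof is correct and follows essentially the same strategy as the paper: both show the two Gabriel filters coincide, applying Lemma~\ref{lemma big hom} in each direction and invoking the graded Goldie theorem for the harder inclusion $L^r\subseteq\mathbb{L}_P$. Your justification that $(u^{-1}J+P)/P$ is graded essential---via vanishing of its homogeneous left annihilator---is in fact slightly more explicit than the paper's, which passes directly from ``nontrivial'' to ``essential''.
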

\begin{proof}
We will prove that the graded Gabriel filters of both torsion theories coincide. Let $E=E^g(R/P)$ and $J\in\mathbb{L}_{E}^r$. By lemma \ref{filter char} this is equivalent to say that for all $0\neq a+P\in h(R/P)$ and for all $x\in h(R)$, $(a+P)(x^{-1}J)\neq 0$. This means that, for any choice of $a+P\in h(R/P)$, the two-sided ideal 
\begin{equation}\nonumber
K:=(R/P)(a+P)((x^{-1}J+P)/P)\triangleleft R/P
\end{equation}
is nonzero. Since $R/P$ is a two-sided noetherian ring, it is well-known that every two-sided ideal is essential as a right ideal and, therefore, by Goldie's theorem for graded rings 
 (see \cite{GS}, theorem 4), we have that $K$ contains a homogenous regular element $c+P$. Now $c+P=(ba+P)(t+P)$ where $t\in x^{-1}J$. Clearly, $t+P$ is right regular and, since $R/P$ is a prime noetherian ring, it is regular. Therefore, we conclude that $t\in x^{-1}J\cap C(P)$. By lemma \ref{there is a homog}, we also have that $x^{-1}J\cap C^g(P)\neq \varnothing$ and thus $J\in \Lbb_P$.

Conversely, suppose $J\in \mathbb{L}_{P}$ and let $a,b\in h(R), \ b\notin P$. By hypothesis, $a^{-1}J\cap C^g(P)\neq \varnothing$. Let $z$ be one of its elements. Then, clearly, $az\in J$ and $bz\notin P$. Again, by remark \ref{lemma big hom}, the result follows.
\end{proof}

In the commutative positively graded case, however, the torsion theory associated with the injective module $E^g(R/P)$ coincides with the one associated to the multiplicative set $h(R\setminus P)$ by the following well-known result, the proof of which we, thus, omit. 

\begin{proposition}
Let $R$ be a commutative noetherian positively graded connected $\mathbb{K}$-algebra generated in degree 1, $P$ a homogeneous prime ideal in $R$ not equal to the irrelevant ideal and $M$ a graded $R$-module. Then, for $S=h(R\setminus P)$, $S^{-1}M=0$ if and only if $(S^{-1}M)_0 =0$. 
\end{proposition}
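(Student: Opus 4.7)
The plan is to prove the nontrivial direction $(S^{-1}M)_0 = 0 \Rightarrow S^{-1}M = 0$. In the commutative setting this amounts to showing that every homogeneous $m \in M$ is annihilated by some element of $S$, since $S^{-1}M = 0$ holds precisely when $M$ is $S$-torsion.

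The key structural input is Remark \ref{rem}: because $P$ is a homogeneous prime distinct from the irrelevant ideal and $R$ is connected positively graded generated in degree one, we have $P_n \subsetneq R_n$ for every $n \geq 1$, and $P \cap R_0 = 0$. Consequently $S$ meets $R_n$ for every $n \geq 0$; in particular $1 \in S$. This abundance of homogeneous elements of $S$ in each nonnegative degree is exactly what lets me bridge arbitrary degrees and degree zero.

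Now suppose $(S^{-1}M)_0 = 0$ and pick $m \in h(M)$ of degree $d$. If $d \geq 0$, choose $s \in S \cap R_d$; the fraction $m/s$ lies in $(S^{-1}M)_0$ and hence equals zero, which by the standard commutative localisation criterion produces $u \in S$ with $um = 0$. If $d < 0$, choose $x \in S \cap R_{-d}$; then $xm$ is homogeneous of degree zero, so by the previous case there exists $v \in S$ with $v(xm) = 0$, and $vx \in S$ annihilates $m$. Thus every homogeneous element of $M$ is $S$-torsion, which is equivalent to $S^{-1}M = 0$.

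I do not expect a serious obstacle here, as the argument is essentially a degree-normalisation resting entirely on Remark \ref{rem}. It is perhaps worth noting that this simple reduction is exactly what fails in the noncommutative context of Theorem \ref{theorem two torsions}, where one cannot in general rescale a homogeneous element to degree zero by multiplying by an element of $C^g(P)$, and one must instead compare graded Gabriel filters and invoke graded Goldie theory to identify the two torsion theories.
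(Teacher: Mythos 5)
Your proof is correct and takes essentially the same route as the paper: both hinge on Remark~\ref{rem} supplying homogeneous elements of $S$ in every nonnegative degree and then normalising a given homogeneous element to degree zero where the hypothesis applies. The only difference is packaging: the paper wraps the negative-degree case in the short exact sequence $0\to M_{\geq 0}\to M\to M/M_{\geq 0}\to 0$ and exactness of localisation, whereas you handle each homogeneous element directly by multiplying by an element of $S$ of complementary degree.
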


This shows that under the conditions of the proposition above, the torsion theory cogenerated by $E^g(R/P)$ is automatically rigid. This statement, however, can be proved without assuming commutativity.

\begin{lemma}\label{cogen is rigid}
Let $R$ be a noetherian positively graded connected $\mathbb{K}$-algebra generated in degree 1, $P$ a homogeneous prime ideal in $R$ not equal to the irrelevant ideal and $M$ a right graded $R$-module. Then, ${\rm Hom}_{Gr(R)}(M,E^g(R/P))=0$ if and only if $\overline{{\rm Hom}}(M,E^g(R/P))=0$.
\end{lemma}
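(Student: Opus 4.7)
The direction $\overline{\mathrm{Hom}}(M,E^g(R/P))=0 \Rightarrow \mathrm{Hom}_{Gr(R)}(M,E^g(R/P))=0$ is immediate, since the latter appears as the degree-zero summand of the former.

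For the converse I plan to invoke Theorem \ref{theorem two torsions}, which identifies $\overline{\mathrm{Hom}}(M,E^g(R/P))=0$ with $M$ being $C^g(P)$-torsion (note that $R$ Noetherian implies $R/P$ right Noetherian, as needed). So it is enough to show that the hypothesis forces, for every $t \in h(M)$ of degree $d$, the right ideal $J := \mathrm{ann}_r(t)$ to meet $C^g(P)$. My strategy is to establish that $\bar J := (J+P)/P$ is a graded essential right ideal in the graded prime Noetherian ring $R/P$, and then to invoke graded Goldie's theorem (as in the proof of Theorem \ref{theorem two torsions}) to extract a homogeneous regular element $\bar s \in \bar J$; any homogeneous lift $s = j+p$ with $j\in J$, $p\in P$ then provides the desired element $j \in J \cap C^g(P)$.

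To verify essentiality I will show that, given any nonzero homogeneous $\bar a \in R/P$, one can find $\bar u \in R/P$ with $\bar a \bar u \in \bar J$ and $\bar a \bar u \neq 0$. The idea is to apply Lemma \ref{lemma homg} not to $t$ itself, but to $ta \in h(M)$ of degree $d+\deg a$, paired with a test element $\bar f = \bar v \bar a \in (R/P)_{d+\deg a}$, where $\bar v \in (R/P)_d$ is chosen so that $\bar v \bar a \neq 0$. The lemma produces $u \in h(R)$ with $(ta)u = 0$, so $au \in J$, and $\bar v \bar a \bar u \neq 0$; since $P$ is two-sided, this forces $\bar a \bar u \neq 0$, and $\bar a \bar u$ witnesses the desired nonzero intersection in $\bar a (R/P) \cap \bar J$.

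The main obstacle is guaranteeing the existence of the witness $\bar v \in (R/P)_d$ with $\bar v \bar a \neq 0$. This rests on an auxiliary claim: for every nonzero homogeneous $\bar a \in R/P$ and every $n \geq 0$, one has $(R/P)_n \cdot \bar a \neq 0$. I would prove it by contradiction. If $(R/P)_n \bar a = 0$, then generation in degree one propagates the equality to $(R/P)_{\geq n}\, \bar a = 0$. The two-sided ideal of $R/P$ generated by $\bar a$ is nonzero, hence essential as a right ideal by primeness, and so contains a homogeneous regular element $\bar b = \sum \bar x_i \bar a \bar y_i$ by graded Goldie. A short calculation, using that $R/P$ is non-negatively graded, gives $(R/P)_{\geq n}\, \bar b = 0$, which contradicts the regularity of $\bar b$ because $(R/P)_n \neq 0$; the latter follows from Remark \ref{rem} together with the observation that $P \neq R_+$ and generation in degree one prevent $P_1 = R_1$, hence $(R/P)_n$ is nonzero for all $n \geq 0$.
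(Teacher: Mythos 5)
There is a gap in the essentiality step. To build your test element $\bar f=\bar v\bar a$ you need a nonzero $\bar v\in(R/P)_d$ with $\bar v\bar a\neq 0$, where $d=\deg(t)$. But $M$ is an arbitrary graded module, so $t$ may well have negative degree; then $(R/P)_d=0$, since $R/P$ is positively graded, and no such $\bar v$ exists. Your auxiliary density claim is (correctly) only stated for $n\geq 0$ and does not rescue this case. So for $t\in h(M)$ with $\deg(t)<0$ your application of Lemma \ref{lemma homg} breaks down, and you have not shown $\mathrm{ann}_r(t)\cap C^g(P)\neq\emptyset$; hence you cannot conclude that $M$ is $C^g(P)$-torsion, which is exactly what you need to feed into Theorem \ref{theorem two torsions}.

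To repair this you would need a degree shift: by graded Goldie, $R_+/P$ is essential in the prime Goldie ring $R/P$, so $C^g(P)$ contains homogeneous elements of arbitrarily high degree; pick $s\in C^g(P)$ with $\deg(ts)\geq 0$, prove the claim for $ts$, and transfer it back to $t$ (if $a\in\mathrm{ann}_r(ts)\cap C^g(P)$ then $sa\in\mathrm{ann}_r(t)\cap C^g(P)$). But that shift is precisely the heart of the paper's own proof, which works the contrapositive directly: from $\overline{\mathrm{Hom}}(M,E^g(R/P))\neq 0$ it produces, via Lemma \ref{lemma big hom}, some $m\in h(M)$ with $\mathrm{Ann}(m)\cap C^g_l(P)=\emptyset$, then multiplies $m$ by a high-degree $s\in C^g(P)$ to land in $M_{\geq 0}$ and checks that the annihilator condition is inherited by $ms$. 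That route uses Goldie only once and never invokes Theorem \ref{theorem two torsions}, the essentiality of $(J+P)/P$, or the density claim. Once you patch the negative-degree case your argument is correct, but it properly contains the paper's key step as a subroutine and is considerably longer.
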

\begin{proof}
One direction is clear. Suppose $\overline{{\rm Hom}}(M,E^g(R/P))\neq 0$. Then by remark \ref{lemma big hom} there is $m\in h(M)$ such that $Ann(m)\cap C^g_l(P) =\varnothing$, where $Ann(m)$ stands for right annihilator of $m$ and $C^g_l(P)$ stands for homogeneous left regular elements mod $P$. We want to prove ${\rm Hom}_{Gr(R)}(M,E^g(R/P))\neq 0$. By remark \ref{rem}, there is a homogeneous element in $R\setminus P$ in each positive degree and thus, by lemma \ref{lemma homg}, ${\rm Hom}_{Gr(R)}(M,E^g(R/P))\neq 0$ is equivalent to the existence of $\tilde{m}\in h(M_{\geq 0})$ such that $Ann(\tilde{m})\cap C^g_l(P) =\varnothing$.  

Note that the irrelevant ideal, $R_+$, is a homogeneous maximal ideal containing $P$. Clearly, $R_+/P$ is an essential ideal in the graded prime Goldie ring $R/P$ and so, by graded Goldie's theorem (see \cite{GS}, theorem 4), it containing a regular element. This means that there is a homogeneous regular element of positive degree in $R/P$ and thus $C^g(P)_{\geq k}\neq \varnothing$ for all $k\in\mathbb{N}$. Choose $s\in C^g(P)$ such that deg$(ms)\geq 0$. Note that if there is $a\in Ann(ms)\cap C^g_l(P)$, then $sa\in Ann(m)\cap C^g_l(P)$ yielding a contradiction. Therefore, take $\tilde{m}=ms$ and we are done.
\end{proof}
\begin{remark}
We summarise the results of this section. If $R$ is a noetherian positively graded connected $\mathbb{K}$-algebra generated in degree 1, $P$ a homogeneous prime ideal not equal to the irrelevant ideal $R_+$ and $M$ a graded $R$-module, then the following are equivalent:
\begin{enumerate}
\item {\rm Hom}$_{Gr(R)}(M,E^g(R/P))=0$;
\item $\overline{{\rm Hom}}(M,E^g(R/P))=0$;
\item $M$ is torsion with respect to $C^g(P)$.
\end{enumerate}
If, furthermore, $R$ is commutative and $S=h(R\setminus P)$, then (1), (2) and (3) are equivalent to $S^{-1}M=0$ and to $(S^{-1}M)_0=0$.
\end{remark}
\end{section}

\begin{section}{t-structures via torsion theories}
Recall that an abelian category $\Acal$ is said to be AB4 if it admits arbitrary coproducts and they are exact. It is well known that, under this assumption, $\Dcal(\Acal)$ admits arbitrary coproducts as well. %Derived categories, Hoshino, http://www.u-gakugei.ac.jp/~miyachi/papers/DC1.pdf
In this section we will show that, for $a\in\mathbb{Z}$, $n\in\mathbb{N}$ and certain ordered sets (indexed by a string of integers of length $n$ starting at $a$) of hereditary torsion classes in an AB4 abelian category $\Acal$
\begin{equation}\nonumber
S = \left\{\Tcal_a,\Tcal_{a+1},...,\Tcal_{a+n-1}\right\}\ \ {\rm with}\ \ \Tcal_a\supseteq \Tcal_{a+1}\supseteq \Tcal_{a+2}\supseteq ...\supseteq \Tcal_{a+n-1}=0,
\end{equation}
the following subcategory is the aisle of a t-structure in $\Dcal^b(\Acal)$,
\begin{equation}\nonumber
\Dcal^{S,\leq 0} := \left\{ X^{\bullet}\in \Dcal^b(\mathcal{A}): \ H_0^i(X^{\bullet})\in \Tcal_j,\ \forall i>j\right\}.
\end{equation}

\begin{remark}\label{aisle contained}
Clearly such a category is a subcategory of $\Dcal_0^{\leq a+n-1}$, a shift of the standard aisle. This follows from the assumption that $\Tcal_{a+n-1}=0$.
\end{remark}

Our proof relies on a suitable iteration of a well-known theorem, originally due to Happel, Reiten and Smal\o \ (\cite{HRS}, Proposition 2.1). We present here a slightly modified version of that result, as stated by Bridgeland (\cite{Br}). Recall that a t-structure $(\Dcal^{\leq 0},\Dcal^{\geq 0})$ in a triangulated category $\Dcal$ is said to be bounded if
%$$\bigcap\limits_{n\in\mathbb{Z}}\Dcal^{\leq n}=0=\bigcap\limits_{n\in\mathbb{Z}}\Dcal^{\geq n}$$
$$\bigcup\limits_{n\in\mathbb{Z}}\Dcal^{\leq n}=\Dcal=\bigcup\limits_{n\in\mathbb{Z}}\Dcal^{\geq n}.$$

\begin{theorem}[Happel, Reiten, Smal\o, \cite{HRS}, Bridgeland, \cite{Br}]\label{HRS tilting}
Let $\mathcal{A}$ be the heart of a bounded t-structure in a triangulated category  $\mathcal{D}$. Suppose that  $(\mathcal{T},\mathcal{F})$ is a torsion theory in $\mathcal{A}$ and that $H^i$ denotes the i-th cohomology functor with respect to $\mathcal{A}$. Then $(\mathcal{D}^{\leq 0}, \mathcal{D}^{\geq 0})$ is a t-structure in $\mathcal{D}$, where
\begin{equation}\nonumber
\mathcal{D}^{\leq 0}=\left\{E\in\mathcal{D}: H^i(E)=0,\ \forall i>0, H^0(E)\in \mathcal{T}\right\}
\end{equation}
\begin{equation}\nonumber
\mathcal{D}^{\geq 0}=\left\{E\in\mathcal{D}: H^i(E)=0,\ \forall i<-1, H^{-1}(E)\in \mathcal{F}\right\}.
\end{equation}
Moreover, $(\Fcal[1],\Tcal)$ is a torsion theory in $\Dcal^{\leq 0}\cap\Dcal^{\geq 0}$.
\end{theorem}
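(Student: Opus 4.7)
The plan is to verify the three $t$-structure axioms directly. I begin by unwinding the shift to get the equivalent description: $F\in\mathcal{D}^{\geq 1}$ iff $H^j(F)=0$ for $j<0$ and $H^0(F)\in\mathcal{F}$. The first axiom $\mathcal{D}^{\leq 0}\subseteq\mathcal{D}^{\leq 1}$ is then immediate cohomological bookkeeping: for $E\in\mathcal{D}^{\leq 0}$ the identity $H^i(E[1])=H^{i+1}(E)$ shows $H^i(E[1])=0$ for all $i\geq 0$, so $E[1]\in\mathcal{D}^{\leq 0}$, equivalently $E\in\mathcal{D}^{\leq 1}$.

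For the Hom vanishing, take $E\in\mathcal{D}^{\leq 0}$ and $F\in\mathcal{D}^{\geq 1}$ and consider the standard truncation triangles $\sigma^{\leq -1}E\to E\to H^0(E)\to\sigma^{\leq -1}E[1]$ and $H^0(F)\to F\to\sigma^{\geq 1}F\to H^0(F)[1]$ (here $\sigma$ denotes truncation with respect to the original bounded $t$-structure on $\mathcal{D}$). Applying $\mathrm{Hom}(-,F)$ to the first and $\mathrm{Hom}(H^0(E),-)$ to the second, the two outer terms vanish by the standard $t$-structure's Hom vanishing ($\sigma^{\leq -1}E\in\mathcal{D}^{\leq -1}_{std}$ pairs with $F\in\mathcal{D}^{\geq 0}_{std}$, and $H^0(E)\in\mathcal{A}$ pairs with $\sigma^{\geq 1}F\in\mathcal{D}^{\geq 1}_{std}$). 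This reduces $\mathrm{Hom}(E,F)$ to $\mathrm{Hom}_{\mathcal{A}}(H^0(E),H^0(F))$, which is zero by axiom (1) of the torsion theory.

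The substantive step is the truncation triangle for an arbitrary $X\in\mathcal{D}$. The strategy is to blend the standard triangle $\sigma^{\leq 0}X\to X\to\sigma^{\geq 1}X\to$ with the torsion decomposition $\tau H^0(X)\to H^0(X)\to H^0(X)/\tau H^0(X)\to$ of the heart-cohomology at degree zero. Define $A$ via the triangle
\begin{equation}\nonumber
A\longrightarrow\sigma^{\leq 0}X\longrightarrow H^0(X)/\tau H^0(X)\longrightarrow A[1],
\end{equation}
the middle arrow being the composition $\sigma^{\leq 0}X\to H^0(X)\to H^0(X)/\tau H^0(X)$. Applying the octahedral axiom to this factorisation produces a triangle $\sigma^{\leq -1}X\to A\to\tau H^0(X)\to\sigma^{\leq -1}X[1]$; the long exact cohomology sequence then gives $H^i(A)=H^i(X)$ for $i<0$, $H^0(A)=\tau H^0(X)\in\mathcal{T}$ and $H^i(A)=0$ for $i>0$, so $A\in\mathcal{D}^{\leq 0}$. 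A second octahedron, applied to the composition $A\to\sigma^{\leq 0}X\to X$, yields the cone $B$ of $A\to X$ together with a triangle $H^0(X)/\tau H^0(X)\to B\to\sigma^{\geq 1}X\to$. Reading off cohomology gives $H^i(B)=0$ for $i<0$, $H^0(B)=H^0(X)/\tau H^0(X)\in\mathcal{F}$, and $H^i(B)=H^i(X)$ for $i>0$, hence $B\in\mathcal{D}^{\geq 1}$.

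The main obstacle is organising the two octahedra so that the cohomology of $A$ and $B$ is precisely the intended mixture of the standard truncation at degree $0$ with the torsion-theoretic splitting of $H^0(X)$; choosing the factorisations through $H^0(X)$ and through $\sigma^{\leq 0}X$ respectively is what makes the long exact sequences collapse into the required form. Once this is set up, verification is routine, and the remaining axioms are essentially formal consequences of the bounded $t$-structure already present on $\mathcal{D}$ together with the defining property of the torsion pair.
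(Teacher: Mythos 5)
The paper does not actually prove this theorem: it is stated as a citation to Happel--Reiten--Smal\o{} and Bridgeland, so there is no in-paper proof to compare against. Your argument is correct and is essentially the standard HRS proof: verify $\mathcal{D}^{\leq 0}\subseteq\mathcal{D}^{\leq 1}$ by cohomology bookkeeping, reduce the Hom-vanishing to $\mathrm{Hom}_{\mathcal{A}}(H^0(E),H^0(F))=0$ by sandwiching between the original truncation triangles, and build the new truncation triangle $A\to X\to B\to A[1]$ by interleaving the standard truncation of $X$ with the torsion--torsion-free short exact sequence of $H^0(X)$ via two octahedra. The cohomology computations you describe for $A$ and $B$ are exactly right and give $A\in\mathcal{D}^{\leq 0}$, $B\in\mathcal{D}^{\geq 1}$.

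Two small points worth spelling out if you write this up in full. First, in the Hom-vanishing step you tacitly identify $\sigma^{\geq 0}E$ with $H^0(E)$ for $E\in\mathcal{D}^{\leq 0}$, and $\sigma^{\leq 0}F$ with $H^0(F)$ for $F\in\mathcal{D}^{\geq 1}$; this uses that an object of $\mathcal{D}^{\geq 1}_{std}$ (resp. $\mathcal{D}^{\leq -1}_{std}$) with vanishing cohomology is zero, i.e.\ nondegeneracy, which is where the boundedness hypothesis enters. Second, it is worth stating explicitly that $\mathrm{Hom}_{\mathcal{D}}(H^0(E),H^0(F))=\mathrm{Hom}_{\mathcal{A}}(H^0(E),H^0(F))$ because both objects lie in the heart, so that axiom (1) of the torsion theory really applies. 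With those remarks included the proof is complete.
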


The new t-structure (or its heart) obtained in the theorem will be called \textit{the HRS-tilt of $\mathcal{A}$ with respect to $(\mathcal{T},\mathcal{F})$}. We will need a few technical lemmas about this new heart. 
We start with an useful observation about some of its morphisms.

\begin{lemma}\label{epi}
Let $\mathcal{A}$ be the heart of a bounded t-structure in $\mathcal{D}$, a triangulated category. Suppose that  $(\mathcal{T},\mathcal{F})$ is a hereditary torsion theory in $\mathcal{A}$ and that $\Bcal$ is the corresponding heart of the HRS-tilt. For an object $T$ in $\Tcal$ we have:
\begin{enumerate}
\item a morphism $f:T\longrightarrow N$ is an epimorphism in $\Bcal$ if and only if $N$ lies in $\Tcal\subset \Acal$ and $f$ is an epimorphism in $\Acal$;
\item a morphism $f:M\longrightarrow T$ is a monomorphism in $\Bcal$ if and only if $M$ lies in $\Tcal\subset \Acal$ and $f$ is a monomorphism in $\Acal$.
\end{enumerate}
\end{lemma}
\begin{proof}
(1) Let $T$ be an object in $\Tcal$, $N$ in $\Bcal$ and $f$ an epimorphism in Hom$_\Bcal(T,N)$. Let $C\in\Tcal$ and $F[1]\in\Fcal[1]$ be such that we have a short exact sequence in $\Bcal$
\[
0\longrightarrow F[1] \longrightarrow N \longrightarrow C \longrightarrow 0
\]
since $(\Fcal[1],\Tcal)$ is a torsion theory in $\Bcal$.
Consider the following commutative diagram
\begin{equation}\nonumber
\xymatrix{T\ar[r]^f\ar[d]&N\ar[r]\ar[d]&K[1]\ar[r]& T[1]\ar[d]\\ T\ar[r]\ar[d]_f&C\ar[r]\ar[d]&L[1]\ar[r]& T[1]\ar[d]\\ N\ar[r]&C\ar[r]& F[2]\ar[r]& N[1]}
\end{equation}
where the rows are triangles in $\Dcal$ and where $K$ stands for the kernel of $f$ in $\Bcal$ and $L$ for the kernel in $\Bcal$ of the composition of $f$ with the epimorphism $N\rightarrow C$ in $\Bcal$. Note, however, that since $T\in\Tcal$ and $\Tcal$ is a torsion-free class in $\Bcal$ (thus closed under subobjects in $\Bcal$), both $K$ and $L$ lie in $\Tcal$. The octahedral axiom applied to the above diagram yields (after an adequate rotation) the triangle
\[
F\longrightarrow K \longrightarrow L \longrightarrow F[1]
\]
which induces a short exact sequence in $\Acal$, where $F$ is, therefore, a subobject of $K$ in $\Acal$. Since $\Tcal$ is a hereditary torsion class in $\Acal$ and $K$ lies in $\Tcal$, we conclude that $F$ lies in $\Tcal$ and hence it is zero, proving that $N$ is isomorphic to $C$, an object of $\Tcal$.

Conversely, if $N$ lies in $\Tcal$ and $f$ in Hom$_\Acal(T,N)$ is an epimorphism, then its kernel in $\Acal$ also lies in $\Tcal$ ($\Tcal$ is hereditary). Thus, the short exact sequence defined by $f$ is also a short exact sequence in $\Bcal$ and $f$ is an epimorphism in $\Bcal$.

(2) Given a monomorphism  $g\in$ Hom$_\Bcal(M,T)$ for some $M\in\Bcal$ and $T\in\Tcal$, we easily see that $M\in\Tcal$ (since $\Tcal$ is a torsion-free class in $\Bcal$) and, by (1), that the cokernel of $g$ in $\Bcal$ lies in $\Tcal$. Thus, we have a short exact sequence in $\Acal$:
\[
0\longrightarrow M\longrightarrow T\longrightarrow coker(g) \longrightarrow 0.
\]

Conversely, if $M$ lies in $\Tcal$ and $f$ in Hom$_\Acal(T,M)$ is a monomorphism, then its cokernel in $\Acal$ also lies in $\Tcal$ ($\Tcal$ is a torsion class). Thus, the short exact sequence defined by $f$ is also a short exact sequence in $\Bcal$ and $f$ is a monomorphism in $\Bcal$.
\end{proof}

\begin{definition}
Let $\Acal$ be an abelian category. We say that the heart $\Bcal$ of a bounded t-structure in $\Dcal^b(\Acal)$ (or the t-structure itself) is  \textit{uniformly bounded} if there are $m,n\in\mathbb{Z}$ such that $\Bcal\subseteq \Dcal^{\leq m}_0\cap\Dcal^{\geq n}_0$.
A family of objects $(Z_k)_{k\in K}$ in $\Dcal^b(\Acal)$ is \textit{uniformly bounded with respect to a t-structure} $(\Dcal^{\leq 0},\Dcal^{\geq 0})$ if there are $m,n\in\mathbb{Z}$ such that $Z_k\in\Dcal^{\leq m}\cap \Dcal^{\geq n}$ for all $k\in K$. 
\end{definition}

Note that, in an AB4 abelian category $\Acal$, a family $(Z_k)_{k\in K}$ is uniformly bounded with respect to the standard t-structure if and only if its coproduct lies in $\Dcal^b(\Acal)$. This follows from the fact that the standard cohomology commutes with coproducts, since coproducts in $\Acal$ are exact. We now show similar statements for certain hearts.

\begin{lemma}\label{t-exact coprod} Let $\Acal$ be an AB4 abelian category, $(\Dcal^{\leq 0},\Dcal^{\geq 0})$ a uniformly bounded t-structure in $\Dcal^b(\Acal)$ and  $\Bcal$ its heart. Then, $\Bcal$ is cocomplete if and only if existing coproducts are t-exact in $\Dcal^b(\Acal)$.
\end{lemma}
\begin{proof}
Since $\Bcal$ is uniformly bounded and coproducts commute with standard cohomologies, small coproducts of elements in $\Bcal$ exist in $\Dcal^b(\Acal)$. If existing coproducts are t-exact in $\Dcal^b(\Acal)$ then, clearly,  $\Bcal$ is cocomplete. 

Conversely, we first observe that right t-exactness is automatic. This follows from the fact that $\Dcal^{\leq n}$ is left Hom-orthogonal to $\Dcal^{\geq n+1}$ (see, for example, lemma 1.3 in \cite{AJSS}). To prove left t-exactness, let $(Y_k)_{k\in K}$ a family of objects in $\Dcal^{\geq 0}$ such that its coproduct, call it $Y$, lies in $\Dcal^b(\Acal)$.  We shall prove that $Y\in \Dcal^{\geq 0}$. As usual, $\tau^{\leq n}$, $\tau^{\geq n}$ denote the truncation functors and $H^n$ the cohomological functors, for all $n\in\Zbb$, with respect to the fixed t-structure $(\Dcal^{\leq 0},\Dcal^{\geq 0})$. 
Denote by $Y_k^{i}:=\tau^{\geq i}(Y_k)$ and by $B_k^i:=H^i(Y_k)$, for any $k\in K$ and $i\in\Zbb$, the respective truncation and cohomology functors. We have the following sequence of triangles (often called a Postnikov tower or, in certain contexts, a Harder-Narasimham filtration) for each $Y_k$, where $m_k\geq 0$ is the maximal degree for which cohomology does not vanish.  
\begin{equation}\nonumber
\xymatrix{Y_k=Y_k^0 \ar[rr]&& Y_k^{1}\ar[dl]^{[1]}\ar[r]& \dots \ar[r]& Y_k^{m_k}\ar[rr]\ar[ld]^{[1]}&& 0\ar[dl]^{[1]}\\ & B^{0}_k\ar[ul] &&\dots \ar[lu]&& B^{m_k}_k[-m_k]\ar[ul]}.
\end{equation}

Observe that, since $Y\in\Dcal^b(\Acal)$ and $\Bcal$ is uniformly bounded,  
the set $\left\{m_k:k\in K\right\}$ has a maximum - call it $m$.
By extending trivially each of these sequences of triangles to sequences with $m$ triangles and since the coproduct of triangles is a triangle, % see Keller http://atlas.mat.ub.es/grgta/articles/Keller.pdf 
we may consider the coproduct of these sequences componentwise and this yields a (finite) Postnikov tower for $Y$, since $\Bcal$ is cocomplete.  Thus, the Postnikov tower looks as follows:
\begin{equation}\nonumber
\xymatrix{Y\ar[rr]&& \coprod\limits_{k\in K}Y_k^{1}\ar[dl]^{[1]}\ar[r]& \dots \ar[r]& \coprod\limits_{k\in K}Y_k^{m}\ar[rr]\ar[ld]^{[1]}&& 0\ar[dl]^{[1]}\\ & \coprod\limits_{k\in K}B^{0}_k\ar[ul] &&\dots \ar[lu]&& \coprod\limits_{k\in K} B^{m}_k[-m]\ar[ul]}.
\end{equation}
Now, it is clear that $$\coprod\limits_{k\in K}Y_k^{m}\cong \coprod\limits_{k\in K}B^{m}_k[-m] \in \Bcal[-m]\subset \Dcal^{\geq m},$$ since $\Bcal$ is cocomplete. The triangle
\begin{equation}\nonumber
\coprod\limits_{k\in K}B_k^{m-1}[-m+1]\longrightarrow \coprod\limits_{k\in K}Y_k^{m-1}\longrightarrow\coprod\limits_{k\in K}Y_k^{m}\longrightarrow (\coprod\limits_{k\in K}B_k^{m-1}[-m+1])[1]
\end{equation}
shows that $$\coprod\limits_{k\in K}Y_k^{m-1}\in \Dcal^b(\Acal)\ \ \ {\rm and}\ \ \  \coprod\limits_{k\in K}Y_k^{m-1}\in \Dcal^{\geq m-1}.$$ Iterating this argument we get that $Y$ lies in $\Dcal^{\geq 0}$.
\end{proof}

\begin{remark}\label{chain uni bdd}
Note that a uniformly bounded family of objects $(Z_k)_{k\in K}$ in  $\Dcal^b(\Acal)$ with respect to a uniformly bounded t-structure $(\Dcal^{\leq 0},\Dcal^{\geq 0})$ is also uniformly bounded with respect to the standard t-structure in $\Dcal^b(\Acal)$. This can be checked using Postnikov towers, in a similar argument to the one used in the proof above.
\end{remark}

\begin{corollary}\label{coprods commute}
Let $\Acal$ be an AB4 abelian category and $(\Dcal^{\leq 0},\Dcal^{\geq 0})$ a uniformly bounded t-structure in $\Dcal^b(\Acal)$ with cocomplete heart $\Bcal$ and cohomological functors $H^i$, for all $i\in\Zbb$. If $(Z_k)_{k\in K}$ is a uniformly bounded family of objects with respect to $(\Dcal^{\leq 0},\Dcal^{\geq 0})$, then 
\begin{equation}\nonumber
H^i(\coprod\limits_{k\in K} Z_k)=\coprod\limits_{k\in K}H^i(Z_k).
\end{equation}
\end{corollary}
\begin{proof}
Let $Z$ denote the coproduct of the family $(Z_k)_{k\in K}$. By remark \ref{chain uni bdd}, $Z$ lies in $\Dcal^b(\Acal)$. The argument in the proof of lemma \ref{t-exact coprod} shows that a Postnikov tower of $Z$ can be obtained as the coproduct over $K$ of Postnikov towers of each $Z_k$. Since the lower vertices of a Postnikov tower are unique up to isomorphism (they are shifts of the cohomologies with respect to the fixed t-structure), the result follows.
\end{proof}

We will consider torsion classes with a special property. Recall that an object $X$ in $\Dcal(\Acal)$ is compact if the functor Hom$_{\Dcal(\Acal)}(X,-)$ commutes with coproducts.
\begin{definition}
A subcategory $\Tcal$ of a heart $\Bcal$ of $\Dcal^b(\Acal)$ is \textit{compactly generated} in $\Dcal^b(\Acal)$ if every object in $\Tcal$ is the colimit in $\Bcal$ of a family of subobjects in $\Bcal$ which are compact when regarded as objects in $\Dcal^b(\Acal)$.
\end{definition}

Similar results to the following lemma have been obtained by Colpi and Fuller in \cite{CF} for HRS-tilts of the standard heart.

\begin{lemma}\label{cocomplete heart} Let $\Acal$ be an AB4 abelian category. Suppose that $(\Tcal,\Fcal)$ is a hereditary torsion theory in the heart $\Bcal$ of a uniformly bounded t-structure in $\Dcal^b(\Acal)$, with $\Bcal$ cocomplete and $\Tcal$ a subcategory of $\Bcal$ compactly generated in $\Dcal^b(\Acal)$. Then the HRS-tilt of $\Bcal$ with respect to $(\Tcal,\Fcal)$ is uniformly bounded and cocomplete.
\end{lemma}
\begin{proof}
First we show that $\Fcal$ is closed under coproducts. Let $(Y_i)_{i\in I}$ be a family of objects in $\Fcal$ and let $X\in\Tcal$. 
Let $(X_j)_{j\in J}$ be a family of compact subobjects of $X$ in $\Bcal$ (and, hence, in $\Tcal$, by lemma \ref{epi}) such that $X=\varinjlim\limits_{j\in J}X_j$. Now,
\begin{equation}\nonumber
{\rm Hom}_{\Bcal}(\varinjlim_{j\in J} X_j, \coprod\limits_{i\in I} Y_i)=\varprojlim_{j\in J} {\rm Hom}_{\Bcal}(X_j,\coprod\limits_{i\in I} Y_i) =\varprojlim_{j\in J} \coprod\limits_{i\in I}\ {\rm Hom}_{\Bcal}(X_j,Y_i)=0
\end{equation}
since the $X_i$ are compact in $\Dcal^b(\Acal)$ and $\Bcal$ is a full subcategory of $\Dcal^b(\Acal)$. 

Let $\Ccal$ be the HRS-tilt of $\mathcal{B}$ with respect to $(\mathcal{T},\mathcal{F})$, $(Z_k)_{k\in K}$ a family of objects in $\Ccal$ and $Z$ its coproduct. We denote $H^i$ ($i\in\mathbb{Z}$) the cohomological functors defined by the t-structure of which $\Bcal$ is the heart.  
For any $C\in\Ccal$, there is a triangle
\begin{equation}\nonumber
H^{-1}(C)[1]\longrightarrow C\longrightarrow H^0(C) \longrightarrow H^{-1}(C)[2]
\end{equation}
which shows that, since $\Bcal$ is uniformly bounded, then so is $\Ccal$. This shows that $Z$ lies in $\Dcal^b(\Acal)$. Since $\Bcal$ is cocomplete, Corollary \ref{coprods commute} shows that coproducts in $\Dcal^b(\Acal)$ commute with $H^i$, for all $i\in\mathbb{Z}$.  It is then clear that $H^i(Z)=0$ for all $i\neq 0,-1$ and, since both $\Tcal$ and $\Fcal$ are closed under coproducts, $H^{-1}(Z)\in\Fcal$ and $H^0(Z)\in\Tcal$, showing that $Z\in\Ccal$ and, thus, completing the proof.
\end{proof}

The next result can be found in Dickson's work \cite{Dickson} or in Stenstr\"om's book \cite{St}, usually also assuming that the underlying abelian category is complete and well-powered (i.e., the class of subobjects of a given object form a set). It is clear from the proof, however, that the completeness assumption is not necessary and that the well-powered condition on the abelian category can be made weaker. Given a full subcategory $\Xcal$ of an abelian category $\Acal$, we will say that $\Acal$ is \textit{$\Xcal$-well-powered} if for any given object of $\Acal$, the class of its subobjects lying in $\Xcal$ form a set.

\begin{lemma}\label{tor char}
Let $\Acal$ be a cocomplete abelian category and $\Tcal$ a full subcategory of $\Acal$. Assume that every object in $\Tcal$ is the colimit of a directed set of subobjects lying in a subcategory $\Xcal\subseteq \Tcal$ such that $\Acal$ is $\Xcal$-well-powered. Then $\Tcal$ is a torsion class if and only if it is closed under extensions, images and coproducts.
\end{lemma}

The assumptions in the lemma ensure that any object $Y$ in $\Acal$ has a maximal subobject lying in $\Tcal$: it is the colimit of all subobjects of $Y$ that lie in $\Xcal$ (which form a set by hypothesis). 

The following lemma is crucial in the proof of theorem \ref{main it}. Although we need the technical assumptions in the lemma for this abstract setting, they are harmless for the purpose of our applications (see remark \ref{good cats} at the end of this section).

\begin{lemma}\label{new torsion class}
Let $\Acal$ be an AB4 abelian category and consider a uniformly bounded t-structure in $\Dcal^b(\Acal)$ with a cocomplete heart $\Bcal$. Suppose that $(\Tcal,\Fcal)$ is a hereditary torsion theory in $\Bcal$ such that $\Tcal$ is a subcategory of  $\Bcal$ compactly generated in $\Dcal^b(\Acal)$ and such that the compact objects of $\Dcal^b(\Acal)$ lying in $\Tcal$ form a set. Let $\Ccal$ denote the HRS-tilt of $\Bcal$ with respect to $(\Tcal,\Fcal)$. Then, a subcategory $\Tcal_1$ of $\Tcal$ is a hereditary torsion class in $\Ccal$ if and only if it is a hereditary torsion class in $\Bcal$, in which case the subcategory $\Tcal_1$ of $\Ccal$ is, moreover, compactly generated in $\Dcal^b(\Acal)$.
\end{lemma}
\begin{proof}
Suppose first that $\Tcal_1$ is a hereditary torsion class in $\Bcal$. We first show that $\Tcal_1$ is closed under extensions, coproducts and epimorphic images in $\Ccal$. The first two hold trivially (exact sequences in $\mathcal{C}$ are precisely the distinguished triangles of $\Dcal(\mathcal{A})$ that lie in $\mathcal{C}$ and if the two outer terms lie in $\Bcal$ then so does the middle one) since $\Tcal_1$ is a torsion class in $\Bcal$. To see it is closed under epimorphisms, we use lemma \ref{epi}. Indeed, if $f:T\longrightarrow C$ is an epimorphism in $\Ccal$ with $T\in\Tcal_1\subseteq \Tcal$ and $C\in\Ccal$, we have that, by lemma \ref{epi}, $C\in \Tcal$ and $f$ is an epimorphism in $\Bcal$. Since $\Tcal_1$ is a torsion class in $\Bcal$, $C$ must lie in $\Tcal_1$. Finally, observe that if $g:C'\longrightarrow T$ is a monomorphism in $\Ccal$ with $C'\in\Ccal$ and $T\in\Tcal_1$, then, by lemma \ref{epi}, $C'$ lies in $\Tcal$ and $g$ is a monomorphism in $\Bcal$.  Therefore, since $\Tcal_1$ is hereditary in $\Bcal$, $C'$ lies in $\Tcal_1$. We furthermore observe that $\Tcal_1$ is compactly generated in $\Dcal^b(\Acal)$ as a subcategory of $\Ccal$. Indeed, for $X$ in $\Tcal_1$, consider a family $(X_j)_{j\in J}$ of subobjects of $X$ in $\Bcal$, compact in $\Dcal^b(\Acal)$,  such that $X=\varinjlim_{j\in J} X_j$. Since $\Tcal_1$ is a hereditary torsion class in $\Bcal$, each $X_j$ lies in $\Tcal_1$ and it is a subobject of $X$ in $\Ccal$. Now, $X$ lies in $\Ccal$ and so does the coproduct of the family $(X_j)_{j\in J}$ (more precisely it lies in $\Tcal_1$). The colimit of this family in $\Ccal$ is the cokernel of an endomorphism of the coproduct (see, for example, \cite{St}, IV.8.4) and, thus, it is still $X$, as wanted.

Note that, by lemma \ref{cocomplete heart}, $\Ccal$ is cocomplete. To complete the proof that $\Tcal_1$ is a hereditary torsion class in $\Ccal$ using lemma \ref{tor char}, we just need to show that $\Ccal$ is $\Xcal$-well-powered, where $\Xcal$ is the subcategory of $\Tcal_1$ formed by those objects which are compact in $\Dcal^b(\Acal)$. This follows from the fact that $\Tcal_1$ is a subcategory of $\Ccal$ compactly generated in $\Dcal^b(\Acal)$ and from our assumption that the compact objects of $\Dcal^b(\Acal)$ lying in $\Tcal$ form a set (and, thus, so do the ones lying in $\Tcal_1$). It is then clear that, for any object $Y$ in $\Ccal$, the family of subobjects of $Y$ lying in $\Xcal$ lies in the product of the sets $Hom_\Ccal(X,Y)$, where $X$ runs over the set $\Xcal$.

Conversely, suppose $\Tcal_1$ is a torsion class in $\Ccal$. As a subcategory of $\Bcal$ it is again obviously closed under extensions and coproducts. Suppose that $f:T\longrightarrow B$  is an epimorphism in $\Bcal$ with $T\in\Tcal_1\subseteq \Tcal$ and $B\in\Bcal$. Then, clearly $B\in \Tcal$ and, by lemma \ref{epi}, $f$ is an epimorphism in $\Ccal$. Hence, since $\Tcal_1$ is a torsion class in $\Ccal$, $B$ must lie in $\Tcal_1$. Moreover, observe that if $g:B'\longrightarrow T$ is a monomorphism in $\Bcal$ with $B'\in\Bcal$ and $T\in\Tcal_1$, then, clearly $B'\in\Tcal$ and, by lemma \ref{epi}, $g$ is a monomorphism in $\Ccal$.  Therefore, since $\Tcal_1$ is hereditary in $\Ccal$, $B'$ lies in $\Tcal_1$. Finally, since $\Bcal$ is cocomplete and $\Tcal_1$ is $\Xcal$-well-powered, where $\Xcal$ is the set of compact objects of $\Dcal^b(\Acal)$ lying in $\Tcal_1$, lemma \ref{tor char} concludes the proof.
\end{proof}

We need one more simple but useful lemma (in light of remark \ref{aisle contained}) about the relation between truncations of t-structures whose aisles are related by inclusion.

\begin{lemma}\label{two t-s}
Suppose $(\mathcal{D}_A^{\leq 0},\mathcal{D}_A^{\geq 0})$ and $(\mathcal{D}_B^{\leq 0},\mathcal{D}_B^{\geq 0})$ are two t-structures in a triangulated category $\mathcal{D}$ with truncation functors $\tau^{\leq0}_A$ and $\tau^{\leq0}_B$, respectively. If $\mathcal{D}_A^{\leq 0}\subset\mathcal{D}_B^{\leq 0}$, then for all $X\in\mathcal{D}$ there is a triangle:
\begin{equation}\label{two trunc}
\tau^{\leq 0}_A(X)\longrightarrow \tau^{\leq 0}_B(X) \longrightarrow Y \longrightarrow \tau^{\leq 0}_A(X)[1]
\end{equation}
such that $Y\in D_A^{\geq 1}\cap D_B^{\leq 0}$.
\end{lemma}
\begin{proof}
First note that, since $\mathcal{D}_A^{\leq 0}\subset\mathcal{D}_B^{\leq 0}$, we have $\mathcal{D}_B^{\geq 1}\subset\mathcal{D}_A^{\geq 1}$. The triangle
\begin{equation}\nonumber
\tau_B^{\leq 0}(X)\longrightarrow  X \longrightarrow \tau_B^{\geq 1}(X) \longrightarrow \tau_B^{\leq 0}(X)[1]
\end{equation}
then shows that the natural map $\tau_A^{\leq 0}(X)\longrightarrow  X$ must factor through $\tau_B^{\leq 0}(X)$ (since Hom$(\tau_A^{\leq 0}(X),\tau_B^{\geq 1}(X))=0$). Let $Y$ be defined by the following triangle
\begin{equation}\nonumber
\tau_A^{\leq 0}(X)\longrightarrow  \tau_B^{\leq 0}(X) \longrightarrow Y \longrightarrow \tau_A^{\leq 0}(X)[1].
\end{equation}
Since aisles are closed under taking cones and $\tau_A^{\leq 0}(X)\in D_B^{\leq 0}$, we have that $Y\in D_B^{\leq 0}$. We want to prove $Y\in D_A^{\geq 1}$. Consider the diagram
\begin{equation}\nonumber
\xymatrix{\tau_A^{\leq 0}(X)\ar[r]\ar[d]&\tau_B^{\leq 0}(X)\ar[r]\ar[d]&Y\ar[r]& \tau_A^{\leq 0}(X)[1]\ar[d]\\ \tau_A^{\leq 0}(X)\ar[r]\ar[d]&X\ar[r]\ar[d]&\tau_A^{\geq 1}(X)\ar[r]& \tau_A^{\leq 0}(X)[1]\ar[d]\\ \tau_B^{\leq 0}(X)\ar[r]&X\ar[r]&\tau_B^{\geq 1}(X)\ar[r]& \tau_B^{\leq 0}(X)[1]}
\end{equation}
where rows are triangles and the squares commute by the observation above. Then, the octahedral axiom gives us a new triangle
\begin{equation}\nonumber
Y\longrightarrow  \tau_A^{\geq 1}(X) \longrightarrow \tau_B^{\geq 1}(X) \longrightarrow Y[1].
\end{equation}
Since $\tau_B^{\geq 1}(X)\in D_A^{\geq 1}$, so is $\tau_B^{\geq 1}(X)[-1]$. By the long exact sequence of cohomology induced from this triangle, it is easy to see that this shows that $Y\in D_A^{\geq 1}$.
\end{proof}

We say that a heart $\Bcal$ is obtained by \textit{iterated HRS-tilts} in $\Dcal^b(\Acal)$ if there is a finite sequence of hearts $\Acal=\Acal_0, \Acal_1,...,\Acal_n=\Bcal$ and a sequence of subcategories $\Tcal_0,...,\Tcal_{n-1}$ such that $\Tcal_i$ is a torsion class in $\Acal_i$ and $\Acal_{i+1}$ is the HRS-tilts of $\Acal_i$ with respect to the torsion theory given by $\Tcal_i$, for all $0\leq i\leq n-1$. We now prove the main result of this section. 

\begin{theorem}\label{main it}
Let $\Acal$ be an AB4 abelian category and $S = \left\{\Tcal_a,\Tcal_{a+1}...,\Tcal_{a+n-1}\right\}$ a set of hereditary torsion classes of $\Acal$, compactly generated in $\Dcal^b(\Acal)$, such that
$$\Tcal_a\supseteq \Tcal_{a+1}\supseteq \Tcal_{a+2}\supseteq ...\supseteq \Tcal_{a+n-1}=0$$
and such that the compact objects of $\Dcal^b(\Acal)$ lying in $\Tcal_a$ form a set. Then, the full subcategory given by 
\begin{equation}\nonumber
\Dcal^{S,\leq 0} = \left\{ X^{\bullet}\in \Dcal^b(\mathcal{A}): \ H_0^i(X^{\bullet})\in \Tcal_j,\ \forall i>j\right\}
\end{equation}
is the aisle of a uniformly bounded t-structure in $\Dcal^b(\Acal)$ with a cocomplete heart $\Bcal$ and it is obtained by iterated HRS-tilts with respect to the sequence $S$.
\end{theorem}
\begin{proof}
Without loss of generality, we assume that $a=-n+1$. We use induction on the number $n$ of elements of $S$ to show that $\Dcal^{S,\leq 0}$ can be obtained by iterated HRS-tilts with respect to a sequence of torsion classes given by the following chain
\begin{equation}\nonumber
\Tcal_{-n+1}\supseteq \Tcal_{-n+2} \supseteq ... \supseteq \Tcal_{-1}\supseteq \Tcal_0=0 
\end{equation}

Suppose $n=1$, i.e., $S=\left\{\Tcal_0=0\right\}$. Then, we have
\begin{equation}\nonumber
\Dcal^{S,\leq 0} = \left\{ X\in \Dcal^b(\mathcal{A}): H^i_0(X)=0, \forall i>0\right\} = \Dcal^{\leq 0}_0,
\end{equation}
the standard aisle in $\Dcal^b(\Acal)$, which clearly satisfies all the desired properties.

Suppose the result is valid for sequences $S$ of $n$ torsion classes satisfying the assumptions of the theorem. Let $S$ be a sequence with $n+1$ hereditary torsion classes of $\Acal$ which are compactly generated in $\Dcal^b(\Acal)$, $$S=\left\{\Tcal_{-n},\Tcal_{-n+1},...,\Tcal_{0}\right\}\ \ \ {\rm such\ that}\ \ \ \Tcal_{-n}\supseteq \Tcal_{-n+1}\supseteq ...\supseteq \Tcal_{-1}\supseteq \Tcal_0=0.$$ 
Let us consider the sequence $$\overline{S}=\left\{\overline{T}_{-n+1},\overline{T}_{-n+2},...,\overline{T}_{0}\right\} \ \ \ {\rm where}\ \ \ \overline{T}_i=\Tcal_{i-1}, \forall i<0, \overline{T}_0=0.$$  Clearly, $\overline{S}$ is also a decreasing chain of hereditary torsion classes of $\mathcal{A}$, compactly generated in $\Dcal^b(\Acal)$. We fall into the case of $n$ torsion classes and by the induction hypothesis we have an associated uniformly bounded t-structure with a cocomplete heart given by the aisle obtained by iterated HRS-tilts with respect to $\overline{S}$,
\[\Dcal^{\overline{S},\leq 0}=\{X\in\Dcal^b(\Acal): H_0^i(X)\in \overline{T}_j, \forall i>j\}=\] 
\[\ \ \ \ \ \ \ \ =\{X\in\Dcal^b(\Acal): X\in\Dcal_0^{\leq 0}, H_0^i(X)\in \Tcal_{j-1}, \forall i>j\}.\]
We denote the corresponding heart by $\mathcal{B}$ and associated cohomological functor by $H_{\overline{S}}^0:= \tau_{\overline{S}}^{\geq 0}\tau_{\overline{S}}^{\leq 0}$, where the $\tau_{\overline{S}}$'s are the associated truncation functors.
Observe now that $\Tcal_{-1}$ is a subcategory of $\mathcal{B}$. This follows from the fact that it is contained on every torsion class in $\overline{S}$ and that $\Bcal$ is obtained by iterated HRS-tilts. By applying iteratively lemma \ref{new torsion class}, we can also conclude that $\Tcal_{-1}$ is a hereditary torsion class in $\Bcal$, compactly generated in $\Dcal^b(\Acal)$.
By lemma \ref{cocomplete heart} we get that the HRS-tilt of $\Bcal$ with respect to $\Tcal_{-1}$ yields a uniformly bounded t-structure with a cocomplete heart. It remains to show that the aisle of the HRS-tilt can be expressed in terms of the torsion classes in $S$ as wanted. That follows from the following lemma. 

\begin{lemma} An object $X$ lies in $\Dcal^{S,\leq 0}$ if and only if $H_{\overline{S}}^0(X)$ lies in $\Tcal_{-1}$ and $H_{\overline{S}}^i(X)=0$, for all $i>0$.
\end{lemma}
\begin{proof}
Suppose that $X$ lies in $\Dcal^{S,\leq 0}$. It is clear from the definition of $\overline{S}$ that $\Dcal^{S,\leq 0}\subseteq \Dcal^{\overline{S},\leq 0}$, thus proving that $X$ lies in $\Dcal^{\overline{S},\leq 0}$, which is equivalent to the condition that $H_{\overline{S}}^i(X)=0$, for all $i>0$.
Note that $H^0_{\overline{S}}(X)$ fits in the triangle
\begin{equation}\nonumber
\tau_{\overline{S}}^{\leq -1}(X)\longrightarrow \tau_{\overline{S}}^{\leq 0}(X)\longrightarrow H^0_{\overline{S}}(X)\longrightarrow \tau_{\overline{S}}^{\leq -1}(X)[1],
\end{equation} 
which, again due to the fact that $\Dcal^{S,\leq 0}\subseteq \Dcal^{\overline{S},\leq 0}$, amounts to the triangle
\begin{equation}\label{triangle1}
\tau_{\overline{S}}^{\leq -1}(X)\longrightarrow X \longrightarrow H^0_{\overline{S}}(X)\longrightarrow \tau_{\overline{S}}^{\leq -1}(X)[1].
\end{equation}
Now, lemma \ref{two t-s} applied to $\Dcal^{\overline{S},\leq -1}\subset \Dcal^{\leq -1}_0$ (see remark \ref{aisle contained}) shows that there is $Y\in \Dcal^{\overline{S},\geq 0}\cap \Dcal_0^{\leq -1}$ and a triangle
\begin{equation}\nonumber
\tau^{\leq -1}_{\overline{S}}(X)\longrightarrow \tau_0^{\leq -1}(X) \longrightarrow Y \longrightarrow \tau^{\leq 0}_{\overline{S}}(X)[1].
\end{equation}
Since $X$ lies in $\Dcal^{S,\leq 0}$, we have that $\tau_0^{\leq -1}(X)$ lies in $\Dcal^{\overline{S},\leq -1}$, by construction of $\overline{S}$ and, thus, we get $Y=0$ and $\tau_{\overline{S}}^{\leq -1}(X) \cong \tau_0^{\leq -1}(X)$. Since two of the vertices of a triangle determine the third one up to isomorphism, the triangle \ref{triangle1} shows that $H^0_{\overline{S}}(X)= H^0_0(X)$, which, by definition of $\Dcal^{S,\leq 0}$, tells us that $H^0_{\overline{S}}(X)$ lies in $\Tcal_{-1}$.

Conversely, suppose $X\in \Dcal^{\overline{S},\leq 0}$ and $H^0_{\overline{S}}(X)\in \mathcal{T}_{-1}$. As before, we have a triangle 
\begin{equation}\nonumber
\tau_{\overline{S}}^{\leq -1}(X)\longrightarrow X \longrightarrow H^0_{\overline{S}}(X)\longrightarrow \tau_{\overline{S}}^{\leq -1}(X)[1].
\end{equation}
We now apply to it the standard cohomology functor, getting a long exact sequence of objects in $\Acal$. On one hand, since $\Dcal^{\leq -1}_{\overline{S}}$ is contained in $\Dcal_0^{\leq -1}$, we see that $H^i_0(\tau_{\overline{S}}^{\leq -1}(X))=0$ for all $i>-1$. This shows, in particular, that 
\begin{equation}\label{isomorphism1}
H_0^0(X)\cong H^0_0(H^0_{\overline{S}}(X))
\end{equation}
and, therefore, $H_0^0(X)$ lies in $\Tcal_{-1}$ (hence, it lies in $\Tcal_j$ for all $j<0$). Let us now look at the negative standard cohomologies of $X$. Since, by hypothesis, $H^i_0(H^0_{\overline{S}}(X))=0$ for all $i\neq 0$, the same long exact sequence also yields the following isomorphisms
\begin{equation}\label{equation negative coh}
H^{i}_0(\tau_{\overline{S}}^{\leq -1}(X))\cong H^{i}_0(X), \forall i<0.
\end{equation}
Since $\Dcal^{\overline{S},\leq -1} = \Dcal^{\overline{S},\leq 0}[1]$, we have
\begin{equation}\nonumber
\Dcal^{\overline{S},\leq -1} = \{Y\in\Dcal^b(\Acal): H^i_0(Y)\in \overline{\Tcal}_j, \forall i>j-1\}
\end{equation}
and, thus, the isomorphisms (\ref{equation negative coh}) show that, in particular,
$$H^{i}_0(X)\cong H^{i}_0(\tau_{\overline{S}}^{\leq -1}(X))\in \overline{\Tcal}_{i}=\Tcal_{i-1}, \forall i<0.$$   This proves that, for all $0>i>j$, $H^i_0(X)$ lies in $\Tcal_j$, since the torsion classes form a chain. This fact, together with the isomorphism (\ref{isomorphism1}) proves that $X$ lies in $\Dcal^{S,\leq 0}$.

\end{proof}

$\Dcal^{S,\leq 0}$ can thus be obtained as the aisle of the HRS-tilt of the heart $\mathcal{B}$, defined earlier in this proof, with respect to the torsion theory whose torsion class is $\mathcal{T}_{-1}$, hence finishing the proof.
\end{proof}
\begin{remark}\label{good cats}
In the following sections of the paper we will use this construction in categories of the form $\Dcal^b(Tails(R))$, where $R$ is a positively graded, connected, noetherian $\mathbb{K}$-algebra generated in degree 1 such that one of the following holds:
\begin{itemize}
\item $R$ is commutative (and, thus, $\Dcal^b(Tails(R))\cong \Dcal^b(Qcoh(Proj(R)))$); 
\item $R$ is a 3-dimensional Artin-Schelter regular $\mathbb{K}$-algebra, which is finitely generated as a module over its centre.
\end{itemize}
The technical assumptions on the torsion classes will be satisfied in these contexts.
\begin{enumerate}
\item The category $Tails(R)$ is AB4 (it is, in fact, a Grothendieck category), where every object is a colimit of its subobjects lying in $tails(R)$. The subcategory of compact objects of $\Dcal^b(Tails(R))$ is equivalent to $\Dcal^b(tails(R))$  (\cite{BvdB}, lemmas 4.3.2 and 4.3.3) and, thus, every hereditary torsion class in $Tails(R)$ is compactly generated in $\Dcal^b(Tails(R))$. 

\item The functors $\pi$ and $\Gamma_*$ induce equivalences between the additive categories of torsion-free injective objects in $Gr(R)$ and injective objects in $Tails(R)$ (see \cite{Gab}, corollaries 2 and 3, pp.375, and also lemma \ref{hom tails 0} below). In both cases considered, injective objects in $Gr(R)$ (or in $Tails(R)$) are direct sums of indecomposable injective objects (\cite{SM}, see also proposition \ref{dec} below), which form a set $Inj(Gr(R))$ (or $Inj(Tails(R))$)  parametrised (up to isomorphism and shifts) by homogeneous prime ideals (\cite{Matlis},\cite{NVO},\cite{VOV}). Any torsion theory $(\Tcal,\Fcal)$ considered is cogenerated by a subset $X$ of $Inj(Tails(R))$ and, thus, injective envelopes of objects in $\Tcal$ are direct sums of objects in $Inj(Tails(R))\setminus X$. In our cases, the adjoint pair $(\pi,\Gamma_*)$ restricts to an adjoint pair between $gr(R)$ and $tails(R)$ (see \cite{AZ}) and, thus, the objects of $\Tcal$ which are compact in $\Dcal^b(Tails(R))$ have injective envelopes given by finite direct sums of objects in $Inj(Tails(R))\setminus X$, thus forming a set.
\end{enumerate}

\end{remark}
\end{section}
\begin{section}{Perverse coherent t-structures through torsion theories}

In this section, we will prove our main theorem. We start by fixing some notation. Let $X$ be a smooth projective scheme over an algebraically closed field $\mathbb{K}$ such that its homogeneous coordinate ring $R=\Gamma_*(X)$ is a commutative noetherian positively graded $\mathbb{K}$-algebra generated in degree 1.  We denote by $\pi$ the projection functor from $Gr(R)$ to its quotient $Tails(R)$ (and the corresponding restriction to $gr(R)$). It has a right adjoint given by
\begin{equation}\nonumber
\Gamma_*(\pi M)=\bigoplus\limits_{i\in \mathbb{Z}} {\rm Hom}_{Tails(R)}(\pi R,\pi M(i)).
\end{equation}
For more details on the formalism of these quotient categories check \cite{AZ}, for instance.

Let $p:X^{top} \longrightarrow \mathbb{Z}$ be a perversity as defined in the introduction. Suppose that the perversity has $n$ values and that, without loss of generality, the maximal value of the perversity is zero. Let $I_x$ be the homogeneous ideal of functions vanishing at an element $x$ of $X^{top}$ and, for $i$ in $Im(p)$, define $$E_i=\prod\limits_{\left\{x\in X^{top}:p(x)\leq i\right\}}E^g(R/I_x).$$

\begin{lemma}\label{hom tails 0}
Let $R$ be a ring and $A$ and $B$ graded $R$-modules. If $B$ is torsion-free and injective, then we have an isomorphism Hom$_{Gr(R)}(A,B)\cong$ Hom$_{Tails(R)}(\pi A, \pi B)$.
\end{lemma}
\begin{proof}
This follows from \cite{Gab} (lemme 1, proposition 3, pp. 370--371). Since $\Gamma_*$ is right adjoint to $\pi$, we have Hom$_{Tails(R)}(\pi A, \pi B)\cong$ Hom$_{Gr(R)}(A,\Gamma_*\pi B)$. The unit of the adjunction $\phi:B\rightarrow \Gamma_*\pi B$ has a torsion kernel and a torsion cokernel and, since $B$ is torsion-free, $\phi$ must be and injective map. Since $B$ is an injective object, the short exact sequence induced by $\phi$ splits and, since $\Gamma_*\pi B$ is torsion-free, we conclude that $\phi$ is an isomorphism.  
\end{proof}

Recall that an injective object $I$ of an abelian category $\mathcal{A}$ cogenerates $\Acal$ if, for any $X$ in $\mathcal{A}$, Hom$_\Acal(X,I)=0$ implies $X=0$, i.e., the associated torsion class, $\Tcal_I$, is zero (since $\Tcal_I$ is the kernel of the functor Hom$_{\Acal}(-,I)$). 
\begin{remark}\label{torsion-free}
Note that given $R$ positively graded noetherian connected $\mathbb{K}$-algebra, $R/P$ is torsion-free for any homogeneous prime ideal $P$ not equal to the irrelevant ideal. Indeed, for $x\notin P$, if $xR_{\geq n}=0$, then $(RxR)(R_{\geq n})\subseteq P$ and hence, by \ref{rem}, $RxR\subseteq P$, which yields a contradiction.
\end{remark}

Note that all modules $E_i$ are torsion-free by the remark above. Indeed, since the torsion-free class of a hereditary torsion theory is closed under taking injective envelopes, $E^g(R/P)$ is torsion-free. Also, $\pi E_i$ is injective in $Tails(R)$, for all $i$, since $\pi$ is essentially surjective and $\Gamma_*$ is left exact. 

\begin{corollary}
The object $\pi E_0$ cogenerates $Tails(R)$, where $R=\Gamma_*(X)$ is as above.
\end{corollary}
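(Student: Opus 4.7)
The plan is to use the preceding observations that each $E_i$ is torsion-free and injective in $Gr(R)$, combined with Lemma \ref{hom tails 0}, to transfer the Hom-vanishing hypothesis in $Tails(R)$ to a pointwise vanishing statement on a graded lift of $M$. The summary remark at the end of Section 2 will then translate this into the vanishing of localizations, which via Serre's equivalence $Tails(R)\cong Qcoh(X)$ amounts to the vanishing of all stalks of the associated quasi-coherent sheaf.

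Concretely, let $M\in Tails(R)$ satisfy $\text{Hom}_{Tails(R)}(M,\pi E_i)=0$ for every $i\in\mathrm{Im}(p)$, and pick a graded lift $N$ with $\pi N\cong M$. Replacing $N$ by $N/\tau(N)$, where $\tau$ is the Serre torsion radical, one may assume $N$ is torsion-free without changing $\pi N$. Since each $E_i$ is torsion-free (by Remark \ref{torsion-free} together with closure of the torsion-free class under products) and injective, Lemma \ref{hom tails 0} applies and yields $\text{Hom}_{Gr(R)}(N,E_i)=0$ for every $i$; commuting the product past Hom then gives $\text{Hom}_{Gr(R)}(N,E^g(R/I_x))=0$ for every $x\in X^{top}$.

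By the summary remark of Section 2, applied to the commutative ring $R$ and each homogeneous prime $I_x$ (none of which equals the irrelevant ideal, since $x\in X^{top}$), this vanishing is equivalent to $(S_x^{-1}N)_0=0$ for $S_x=h(R\setminus I_x)$. Under Serre's equivalence, the sheaf $\tilde N$ on $X$ corresponding to $M$ has stalk at $x$ equal to $(S_x^{-1}N)_0$, so $\tilde N$ has zero stalk at every $x\in X^{top}$. Since $X$ is noetherian, every point of $X$ is the generic point of its closure and hence lies in $X^{top}$; a quasi-coherent sheaf with all stalks zero is zero, whence $M=\pi N=0$, so the $\pi E_i$ form a cogenerating set.

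The only step requiring attention is the passage through Lemma \ref{hom tails 0}, for which both torsion-freeness of the lift $N$ (arranged by killing $\tau(N)$) and torsion-freeness of each $E_i$ (already noted in the excerpt) are needed. Once these are in place, the summary of Section 2 together with the Serre equivalence finishes the argument essentially formally; the only genuine geometric input is that on the noetherian scheme $X$ the set $X^{top}$ exhausts all points and a quasi-coherent sheaf with vanishing stalks is zero.
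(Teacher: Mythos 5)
Your proof is correct, but it takes a genuinely different route from the paper's. The paper argues directly: given a graded module $M$ with $\pi M\neq 0$, it picks a non-torsion homogeneous element $m$, uses a Zorn's lemma argument to produce a homogeneous prime $P\supseteq\mathrm{Ann}(m)$ with $P\neq R_+$, builds a nonzero graded map $R/\mathrm{Ann}(m)(\deg m)\to E^g(R/P)(\deg m)$, extends it to $M$ by injectivity, removes the grading shift via Lemma \ref{cogen is rigid}, and finishes with Lemma \ref{hom tails 0}. That argument is entirely module-theoretic; the sheaf side of Serre's equivalence never enters. Your argument instead runs through the geometry: the Hom-vanishing is translated, via the Section 2 summary remark, into vanishing of the degree-zero localizations $(S_x^{-1}N)_0$, which you read off as stalks of the associated quasi-coherent sheaf; since $X^{top}$ exhausts the points of $X$ and a sheaf with vanishing stalks is zero, $M=0$. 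Your version is shorter and subsumes the prime-ideal construction in the basic fact that a quasi-coherent sheaf with vanishing stalks vanishes. What the paper's formulation buys in return is precisely that it stays in module-land, which is a prerequisite for carrying the argument over to the noncommutative situation of Section 5, where there is no underlying scheme whose stalks one could take. Two small notes on your write-up: the replacement of $N$ by $N/\tau(N)$ is superfluous, since Lemma \ref{hom tails 0} only constrains the target; and the step of commuting Hom past the product defining $E_i$ deserves to be made explicit, as it is exactly where the specific shape of $E_i$ is used.
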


\begin{proof}
Suppose that $M$ is not torsion, i.e., that there is an element $m\in h(M)$ such that $Ann(m)\neq R_{\geq n}$ for any $n>1$. We prove that $Ann(m)$ is contained in a homogeneous prime ideal. It is clear that, since $m$ is not torsion, the radical of $Ann(m)$, which we denote by $\sqrt{Ann(m)}$, is not the augmentation ideal $R_+$. Thus we can choose $f\in R_1$ such that $f\notin\sqrt{Ann(m)}$. Applying Zorn's lemma to the set $S=\left\{J\supset Ann(m)\  {\rm homogeneous}: f\notin \sqrt{J}\right\}$ (which is nonempty since $Ann(m)\in S$) we get a maximal element - call it $P$. We prove that $P$ is prime. In fact, for $a, b\in h(R)$, if $ab\in P$ and $a\notin P$, then there is an integer $l$ such that $f^l\in aR+P$ (since $P$ is maximal in $S$). If there is an integer $s$ such that $f^s\in bR+P$, then $f^{l+s}\in (aR+P)(bR+P)\subset P$, a contradiction. Hence $b\in P$. This proves that $P$ is a homogeneous gr-prime ideal. 

To complete the proof we need the lemma below. Recall that in noncommutative ring theory, primality of an ideal $P$ is defined in terms of products of ideals, i.e., if $IJ\subset P$ for some ideals $I$ and $J$, then $I\subset P$ or $J\subset P$. If this property holds at the level of elements (i.e., if $ab\in P$ for some elements $a$, $b$ of the ring, then $a\in P$ or $b\in P$) then we say $P$ is strongly prime. There are obvious graded counterparts of these notions and the following property holds.
\begin{lemma}[Nastasescu, Van Oystaeyen, \cite{NVO2}]
For a $\mathbb{Z}$-graded ring, a homogeneous ideal is gr-strongly prime if and only if it is strongly prime.
\end{lemma}
Since $R$ is commutative, the notions of prime and strongly prime coincide. Hence, $P$ is prime.

Note now that there is a graded isomorphism from $R/Ann(m)(deg(m))$ to $mR$ and thus a graded injection from $R/Ann(m)(deg(m))$ to $M$. Since $Ann(m)$ is contained in a homogeneous prime ideal $P$, $R/Ann(m)(deg(m))$ maps nontrivially to $E^g(R/P)(deg(m))$ and thus so does $M$. Since $R$ satisfies the hypothesis of lemma \ref{cogen is rigid}, one has that $M$ maps nontrivially to $E^g(R/P)$ and thus, by the previous lemma, Hom$_{Tails(R)}(\pi M, \pi E^g(R/P))\neq 0$. 
\end{proof}

Before stating the main theorem, we need to prove the following useful lemma.
\begin{lemma}\label{local ring}
Suppose $R$ is a commutative local ring with maximal ideal $m$. Given $X^{\bullet}$ a bounded complex of finitely generated free R-modules, define $Y^{\bullet}$ to be the complex $R/m\otimes_R X^{\bullet}$. If, for some fixed integer $\alpha$, $H_0^j(Y^{\bullet})=0$ for all $j\geq \alpha$, then $H_0^j(X^{\bullet})=0$ for all $j\geq \alpha$. 
\end{lemma}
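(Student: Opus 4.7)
My plan is to proceed by downward induction on the top nonzero degree $N$ of the complex $X^{\bullet}$ (which exists since the complex is bounded). The base case $N<\alpha$ is vacuous. The key algebraic inputs I will use repeatedly are Nakayama's lemma and the fact that a finitely generated projective module over a commutative local ring is free.

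For the inductive step with $N\geq\alpha$, I first handle the top cohomology. Right exactness of $R/m\otimes_R -$ gives
\begin{equation}\nonumber
H^N(Y^{\bullet}) = Y^N/\mathrm{im}(d^{N-1}_Y) = (R/m)\otimes_R H^N(X^{\bullet}).
\end{equation}
By hypothesis this vanishes, so $H^N(X^{\bullet}) = m\cdot H^N(X^{\bullet})$; since $H^N(X^{\bullet})$ is finitely generated as a quotient of the finitely generated free module $X^N$, Nakayama forces $H^N(X^{\bullet})=0$. Hence $d^{N-1}\colon X^{N-1}\to X^N$ is surjective, and since $X^N$ is free this short exact sequence splits, yielding $X^{N-1}\cong K\oplus X^N$ with $K=\ker d^{N-1}$. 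Being a direct summand of a finitely generated free module over the local ring $R$, the module $K$ is finitely generated free.

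I then replace $X^{\bullet}$ by the truncated-and-kernel-substituted complex $\tilde{X}^{\bullet}$ defined by $\tilde{X}^i = X^i$ for $i\leq N-2$, $\tilde{X}^{N-1} = K$, and $\tilde{X}^i = 0$ otherwise, with the differential $\tilde{d}^{N-2}$ obtained from $d^{N-2}$ (whose image lies in $K$ since $d^{N-1}\circ d^{N-2}=0$). This is again a bounded complex of finitely generated free $R$-modules with top degree $N-1$, and a direct check shows that $H^i(\tilde{X}^{\bullet})=H^i(X^{\bullet})$ for all $i\leq N-1$. Moreover, because the splitting $X^{N-1}\cong K\oplus X^N$ is preserved by $(R/m)\otimes_R -$, the same identification gives $H^i(\tilde{Y}^{\bullet})=H^i(Y^{\bullet})$ for all $i\leq N-1$, where $\tilde{Y}^{\bullet} = (R/m)\otimes_R\tilde{X}^{\bullet}$. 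In particular $H^i(\tilde{Y}^{\bullet})=0$ for all $i\geq\alpha$, so the induction hypothesis applies to $\tilde{X}^{\bullet}$ and yields $H^i(X^{\bullet})=H^i(\tilde{X}^{\bullet})=0$ for all $\alpha\leq i\leq N-1$, which combined with $H^N(X^{\bullet})=0$ finishes the step.

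The only delicate point is the bookkeeping in verifying that truncation-by-kernel preserves both the $X$- and $Y$-cohomology in degrees $\leq N-1$, which is where it is crucial that the extracted kernel $K$ be a genuine direct summand of $X^{N-1}$ (so that tensoring with $R/m$ commutes with the splitting). Everything else is a straightforward application of Nakayama plus the projective-equals-free fact over a commutative local ring.
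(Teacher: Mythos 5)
Your proof is correct and follows essentially the same strategy as the paper's: Nakayama for the top cohomology, using freeness of the top term to split off the kernel of the top differential (which is then free since $R$ is local), and iterating on the truncated complex; you merely phrase the iteration as a formal downward induction and replace the paper's explicit basis computation showing $\ker(d_X)\cap mX = m\ker(d_X)$ by the slicker observation that the splitting is preserved under $R/m\otimes_R(-)$.
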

\begin{proof}
Without loss of generality, let us assume that $H^j_0(Y^{\bullet})=0$ for all $j\geq 1$, i.e., $\alpha=1$. Since $X^\bullet$ is a bounded complex, let $p\in\Zbb$ be its right bound, i.e., $X^{k}=0$ (and hence $Y^k=0$) for all $k> p$. 

If $p<1$ then the result trivially follows. 
Suppose that $p\geq 1$. We will first show that the cohomology $H^p_0(X^\bullet)$ vanishes. Consider the exact sequence
\begin{equation}\nonumber
X^{p-1}\longrightarrow X^p \longrightarrow coker(d_X^{p-1}) \longrightarrow 0
\end{equation}
and apply to it the functor $R/m \otimes_R -$, thus getting another exact sequence
\begin{equation}\nonumber
Y^{p-1}\longrightarrow Y^{p} \longrightarrow R/m\otimes_R coker(d_X^{p-1}) \longrightarrow 0,
\end{equation}
since $R/m\otimes_R -$ is right exact. By definition of $Y^{\bullet}$, the first map of the sequence is the differential $d^{p-1}_Y$. Since $1 \leq p$, $H_0^p(Y^{\bullet})=0$, thus proving that $d^{p-1}_Y$ is surjective ($Y^{p+1}=0$ by definition). Therefore $R/m\otimes_R coker(d_X^{p-1})=0$ which, by Nakayama's lemma (since $R$ is local and $coker(d_X^{p-1})$ is a finitely generated $R$-module), implies that $coker(d_X^{p-1})=0$. Hence $d_X^{p-1}$ is surjective, thus proving that $H_0^p(X^{\bullet})=0$. 

We now prove our result by induction on $p\geq 1$. If $p=1$, the previous paragraph shows that $H_0^1(X^\bullet)=0$ and the result follows. Suppose now that the result is valid for all complexes of free $R$-modules $X^\bullet$ with right bound $p\geq 1$ and let $X^\bullet$ be a complex of free $R$-modules with right bound $p+1$. The previous paragraph shows that $H_0^{p+1}(X^\bullet)=0$. Since $X^{p+2}=0$, there is a short exact sequence
\begin{equation}\nonumber
0 \longrightarrow Ker(d_X^{p}) \longrightarrow X^{p}\longrightarrow X^{p+1} \longrightarrow 0
\end{equation}
which splits since $X^{p+1}$ is free. Thus, $Ker(d_X^{p-1})$ is a summand of the free module $X^{p-1}$, i.e., it is a projective module. However, it is well-known (Kaplansky's theorem) that projective modules over local rings are free and, hence, the complex
\begin{equation}\nonumber
\tilde{X}^\bullet:= \xymatrix{...\ar[r]& X^{p-2}\ar[r]^{d_X^{p-2}}&X^{p-1}\ar[r]^{d_X^{p-1}}&Ker(d_X^{p})\ar[r]&0\ar[r] &...}.
\end{equation}
is a complex of free $R$-modules which is quasi-isomorphic to $X^\bullet$. Since $X^\bullet$ is a complex of free $R$-modules, its tensor product with $R/m$ can be regarded in $\Dcal(Mod(R))$ as the derived tensor product 
$$Y^\bullet\cong R/m\otimes^\Lbb_R X^\bullet\cong R/m\otimes^\Lbb_R \tilde{X}^\bullet$$ 
in $\Dcal(Mod(R))$, where the second isomorphism holds since $X^\bullet$ and $\tilde{X}^\bullet$ are isomorphic in $\Dcal(Mod(R))$. Since $\tilde{X}^\bullet$ is also a complex of free $R$-modules, we have that $Y^\bullet$ is isomorphic to $R/m\otimes_R \tilde{X}^{\bullet}$ in $\Dcal(Mod(R))$, i.e., they are are quasi-isomorphic complexes and, thus $H_0^{i}(R/m\otimes_R \tilde{X}^\bullet)=0$ for all $i\geq 1$. 
Now, the induction hypothesis holds for $\tilde{X}^\bullet$ and, thus, $H^i_0(\tilde{X}^\bullet)=0$ for all $i\geq 1$. Since $\tilde{X}^\bullet$ is quasi-isomorphic to $X^\bullet$, we also have that $H^i_0(X^\bullet)=0$ for all $i\geq 1$, thus finishing the proof.

\end{proof}

Finally we prove our main theorem. It gives us a description of the aisle of a perverse coherent t-structure $\Dcal^{p,\leq 0}$ is terms of torsion classes. 
\begin{theorem}\label{main perverse}
Let $X$ be a smooth projective scheme over $\mathbb{K}$, $R=\Gamma_*(X)$ its homogeneous coordinate ring and $p$ a perversity on $X$. Suppose that $R$ is a commutative connected, noetherian, positively graded $\mathbb{K}$-algebra generated in degree 1. Let $\Tcal_i$ denote the torsion class cogenerated  in $Tails(R)$ by $\pi E_i$, where $$E_i=\prod\limits_{\left\{x\in X^{top}:p(x)\leq i\right\}}E^g(R/I_x),$$ with $I_x$ standing for the defining ideal of $x\in X^{top}$ in $R$. Then we have:
\begin{equation}\nonumber
\Dcal^{p,\leq 0} = \left\{ F^{\bullet}\in \Dcal^b(Tails(R)): H_0^i(F^{\bullet})\in \Tcal_j, \forall i>j\right\}\cap \Dcal^b(tails(R)).
\end{equation}
\end{theorem}
\begin{proof}
Let $S$ be the set of torsion classes $\Tcal_i$. By remark \ref{good cats} and theorem \ref{main it}, $\Dcal^{S,\leq 0}$ is an aisle in $\Dcal^b(Tails(R))$. We will prove that the subcategories $\Dcal^{p,\leq 0}$ and $\Dcal^{S,\leq 0}\cap\Dcal^b(tails(R))$ coincide. We denote by $\widehat{\Tcal_i}$ the torsion theory cogenerated by $E_i$ in $Gr(R)$. We start by rewriting the conditions defining the aisle $\Dcal^{S,\leq 0}$. By definition, we have 
\begin{equation}\nonumber
\Dcal^{S,\leq 0}=\left\{F^{\bullet}\in \Dcal^{b}(Tails(R)):\ H^j_0(F^{\bullet})\in \Tcal_k,\  \forall j>k\right\}
\end{equation}
and, given that $E_k$, for all $k$, is torsion-free injective, by lemma \ref{hom tails 0} we have
\begin{equation}\nonumber
\Dcal^{S,\leq 0}=\left\{F^{\bullet}\in \Dcal^{b}(Tails(R)):\ \Gamma_*(H^j_0(F^{\bullet}))\in \widehat{\Tcal_k},\  \forall j>k\right\}=
\end{equation}
\begin{equation}\nonumber
\left\{F^{\bullet}\in \Dcal^{b}(Qcoh(X)): \forall x \in X^{top},{\rm Hom}_{Gr(R)}(\Gamma_*(H^j_0(F^{\bullet})), E^g(R/I_x))=0,\forall j>p(x)\right\}.
\end{equation}
We intersect with $\Dcal^b(coh(X))$ to pass from a quasi-coherent to a coherent setting. For simplicity, define $\Dcal^{s,\leq 0}:=\Dcal^{S,\leq 0}\cap \Dcal^b(coh(X))$. By corollary \ref{corollary deg zero}, we get
\begin{equation}\nonumber
\Dcal^{s,\leq 0}=\left\{F^{\bullet}\in \Dcal^{b}(coh(X)): \forall x \in X^{top},\Gamma_*(H^j_0(F^{\bullet}))_{(x)}=0,\forall j>p(x)\right\}
\end{equation}
where $\Gamma_*(H^j_0(F^{\bullet}))_{(x)}$ is the degree zero part of the localisation of $\Gamma_*(H^j_0(F^{\bullet}))$ at the prime ideal $I_x$, which is the same as stalk at $x$ of the sheaf $H^j_0(F^{\bullet})$. Since taking stalks is an exact functor (thus t-exact for the standard t-structure and therefore commuting with cohomology functors) we get
\begin{equation}\nonumber
\Dcal^{s,\leq 0}=\left\{F^{\bullet}\in \Dcal^{b}(coh(X)): \forall x \in X^{top},\  H^j_0(F^{\bullet}_{x})=0,\  \forall j>p(x)\right\}.
\end{equation}

On the other hand, recall that
\begin{equation}\nonumber
\Dcal^{p,\leq 0}=\left\{F^{\bullet}\in \Dcal^{b}(coh(X)): \forall x \in X^{top},\  Li_{x}^{*}(F^{\bullet}) \in \Dcal_0^{\leq p(x)}(O_{\{x\}}\mbox{-}mod)\right\},
\end{equation}
which is clearly the same as 
\begin{equation}\nonumber
\left\{F^{\bullet}\in \Dcal^{b}(coh(X)): \forall x \in X^{top},\  H^j_0(Li_{x}^{*}(F^{\bullet}))=0,\ \forall j>p(x)\right\}.
\end{equation}
Therefore, it suffices to prove that $H^j_0(F^{\bullet}_{x})=0$ for all $j>p(x)$ if and only if $L_ji_{x}^{*}(F^{\bullet})=H_0^j(Li_{x}^{*}(F^{\bullet}))=0$ for all $j>p(x)$.

Let $F^{\bullet}\in \Dcal^b(coh(X))$ such that $H^j_0(F^{\bullet}_{x})=0$ for all $j>p(x)$. By definition of the pullback functor ($i_x^{*}(V)=V_x\otimes_{O_{X,x}} k(x)$ for any coherent sheaf $V$, where $k(x)=O_{X,x}/m_{X,x}$, with $m_{X,x}$ being the maximal ideal of the local ring $O_{X,x}$, is the residue field at the point $x$), there is a spectral sequence of Grothendieck type of the following form:
\begin{equation}\nonumber
E_2^{ab}=Tor_{a}^{O_{X,x}}(k(x),H_0^b(F^{\bullet}_x)) \Longrightarrow L_{a+b}i_x^*(F^{\bullet}).
\end{equation}
Our hypothesis shows that $E_2^{ab}=0$ for all  $b>p(x)$ (and, of course, by definition of $Tor$, also for all $a<0$). Thus $E_{\infty}^{ab}=0$ for all $a<0$ or $b>p(x)$. Let $\mathbb{F}^{i}$ denote the $i$-th part of the decreasing filtration assumed to exist (by definition of convergent spectral sequence) on the limit object $\Omega^{a+b}:=L_{a+b}i_x^*(F^{\bullet})$. Then, for $q>p(x)$,
\begin{equation}\nonumber
... = \mathbb{F}^{-2}\Omega^{-2+(q+2)} = \mathbb{F}^{-1}\Omega^{-1+(q+1)} = \mathbb{F}^0\Omega^q = \mathbb{F}^1\Omega^q 
\end{equation}
and thus they are all equal to zero, proving that $\Omega^q=L_qi_x^*(F^{\bullet})=0$ for all $q>p(x)$.

Conversely, suppose we have $F^{\bullet}$ such that $L_ji_{x}^{*}(F^{\bullet})=0$ for all $j>p(x)$. Since $X$ is smooth, let $G^{\bullet}$ be a bounded complex of locally free sheaves such that $G^{\bullet}$ is quasi-isomorphic to $F^{\bullet}$ (thus isomorphic in the derived category) - check, for example, \cite{Huy}, Proposition 3.26. Then $L_ji_{x}^{*}(F^{\bullet})=0$ means that $H^j_0((i_{x}^{*}G)^{\bullet})=0$, where $(i_{x}^{*}G)^{\bullet}$ denotes the complex resulting from applying $i_x^*$ componentwise to $G^{\bullet}$. Take now $X^{\bullet}=G^{\bullet}_x$ and $Y^{\bullet}=(i_{x}^{*}G)^{\bullet}$ and recall that $G^{\bullet}_x$ is a complex of free modules over the local ring $O_{X,x}$. This leaves us in the context of \ref{local ring}, thus proving that $H^j_0(G^\bullet_x)=H^j_0(G^{\bullet})_x=0$ for all $j>p(x)$. Finally we have $H^j_0(F^{\bullet}_x)=H^j_0(F^{\bullet})_x=H^j_0(G^{\bullet})_x = 0$ for all $j>p(x)$, hence finishing the proof.
\end{proof}
\begin{remark}\label{restriction} 
This theorem compares two subcategories of $\Dcal^b(coh(X))$, showing that they coincide. It contains no proof that either of the subcategories involved are aisles of t-structures. However, by proving that the intersection $\Dcal^{S,\leq 0}\cap \Dcal^b(coh(X))$, for $S$ defined as above, coincides with the aisle $\Dcal^{p,\leq 0}$ constructed in \cite{Be}, we do show that, under the assumptions of the theorem, $\Dcal^{S,\leq 0}$ (which is an aisle by section 3) in $\Dcal^b(Qcoh(X))$ restricts well to an aisle in $\Dcal^b(coh(X))$.
\end{remark}
\end{section}
\begin{section}{Perverse quasi-coherent t-structures for noncommutative projective planes}

The aim of this section is to use the construction of section 3 to create an analogue of perverse coherent t-structures in the derived categories of certain noncommutative projective planes, motivated by the comparison established on section 4 in the commutative case. This entails finding an injective cogenerator in $Tails(R)$ for a suitable class of $\mathbb{K}$-algebras $R$ and set up a definition of perversity that generalises the commutative one.

\begin{remark}\label{conj restriction}
$tails(R)$ is not cocomplete and therefore, in this section, we can only do the construction of section 3 in $Tails(R)$ (hence the word \textit{quasi-coherent} rather than \textit{coherent} in the title). However, taking into account theorem \ref{main perverse}, we conjecture that indeed the constructions in this section restrict well to $\Dcal^b(tails(R))$.
\end{remark}

We shall focus on the case where $R$ is a graded elliptic 3-dimensional Artin-Schelter regular algebra which is finite over its centre. These algebras are interesting for our purposes since they are fully bounded noetherian (more than that, they are PI, as proved in \cite{ATV2}). Also, a graded noetherian algebra which is fully bounded is graded fully bounded (\cite{VOV}). This is important for the following result that allows us to parametrise a useful collection of injective objects via prime ideals. In this sense, although these examples are noncommutative, we are still very close to the commutative setting (\cite{Matlis}).

Recall that there is a map from the set of indecomposable injective graded modules to the set of homogeneous prime ideals given by assigning to an injective $E$ its homogeneous assassinator ideal, $Ass(E)$. The assassinator ideal of an indecomposable object is the only prime ideal associated to $E$, i.e., the only prime ideal which is maximal among the annihilators of nonzero submodules of $E$ (and there is a natural graded version of this concept - see \cite{NVO} and \cite{VOV}).

\begin{proposition}[Natasescu, Van Oystaeyen, \cite{NVO}, Theorem C.I.3.2]
Let $R$ be a positively graded noetherian ring. Then $R$ is graded fully bounded if and only if the map that assigns the corresponding assassinator ideal to an indecomposable injective graded module induces a bijection between indecomposable injective modules in $Gr(R)$ (up to isomorphism and graded shift) and homogeneous prime ideals of $R$.
\end{proposition}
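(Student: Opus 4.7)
The plan is to prove both implications by mimicking the classical Matlis correspondence in the graded setting, with the graded fully bounded hypothesis used to tame the set of prime ideals that can appear as assassinators of indecomposable injectives. I will write $E^g(-)$ for the graded injective envelope and $\mathrm{Ass}(E)$ for the (graded) assassinator.

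For the implication $(\Rightarrow)$, assume $R$ is graded fully bounded. First I would check that the map is well defined: if $E$ is an indecomposable injective graded $R$-module, then any two graded associated primes $P_1,P_2$ of $E$ would give nonzero submodules $M_i$ with $\mathrm{ann}(M_i)=P_i$; using the fully bounded condition to ensure that annihilators of finitely generated submodules of $E$ behave well, together with the fact that a uniform module has a unique minimal associated prime, one concludes that $\mathrm{Ass}(E)$ is a single homogeneous prime $P$. The map clearly factors through graded shift because $\mathrm{Ass}(E[n])=\mathrm{Ass}(E)$. Next I would show surjectivity: given a homogeneous prime $P$, the graded Goldie theorem (as already invoked in the excerpt, e.g.\ in the proof of Theorem~\ref{theorem two torsions}) applies to $R/P$, so $R/P$ is a graded-uniform module. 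Hence $E^g(R/P)$ is indecomposable, and its assassinator is precisely $P$.

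For injectivity up to shift, suppose $E$ is an indecomposable injective with $\mathrm{Ass}(E)=P$. Choose a nonzero homogeneous submodule $M\subseteq E$ with $\mathrm{ann}(M)=P$. By graded Goldie applied to $R/P$ (this is where graded fully boundedness is essential, to pick an element whose annihilator is exactly $P$ and not merely contained in $P$), there is a homogeneous element $m\in M$ of some degree $n$ whose annihilator is $P$. This yields a graded embedding $(R/P)[n]\hookrightarrow E$; by injectivity of $E$ and essentiality of $R/P$ in $E^g(R/P)$, this extends to a graded embedding $E^g(R/P)[n]\hookrightarrow E$, and by indecomposability of $E$ and injectivity of $E^g(R/P)[n]$ (which is a summand of $E$), we conclude $E\cong E^g(R/P)[n]$.

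For the converse $(\Leftarrow)$, assume the bijection holds. The idea is to contrapose: if $R$ is not graded fully bounded, there is a homogeneous prime $P$ and a homogeneous element $x\in R/P$ such that the right annihilator $\mathrm{ann}_{R/P}(x)$ contains no nonzero two-sided homogeneous ideal of $R/P$ which intersects the set of regular elements in a controllable way. One can then exhibit two non-shift-equivalent indecomposable summands of $E^g(R/P)$, arising from inequivalent uniform submodules of $R/P$, producing two indecomposable injectives with the same assassinator $P$ that are not related by a graded shift. This contradicts the injectivity of the correspondence. I expect this reverse direction to be the main obstacle, since constructing the two inequivalent injectives explicitly requires a careful analysis of the failure of the graded FBN condition (for instance, via the graded analogue of the Gabriel--Krause argument used in the ungraded case), and one likely needs to invoke the graded version of the second layer condition or an appropriate graded link structure between primes. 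Once this construction is in place, the contrapositive is complete.
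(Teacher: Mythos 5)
This proposition is cited in the paper from Nastasescu--Van Oystaeyen's book and is not proved there, so there is no in-paper argument to compare against; I'll assess your sketch on its own merits.

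Your forward direction has a concrete error in the surjectivity step. You claim that $R/P$ is graded-uniform and hence $E^g(R/P)$ is indecomposable. For a noncommutative prime graded Goldie ring this is false in general: the uniform dimension of $R/P$ as a right module over itself equals its Goldie rank, which can exceed $1$ (think of a prime ring whose graded quotient ring is a matrix ring). The correct statement — and the one the paper uses in Remark~\ref{bij} — is that $E^g(R/P)$ decomposes as a \emph{finite direct sum of copies of a single indecomposable injective} $E_P$, and it is $E_P$ (not $E^g(R/P)$) that the bijection assigns to $P$. Your injectivity step also leans on finding a homogeneous element $m$ with $\mathrm{ann}(m)=P$ exactly; the fully bounded condition gives you that essential right ideals of $R/P$ contain nonzero two-sided ideals, but it does not directly hand you an element of $E$ whose right annihilator is precisely $P$ rather than merely contained in it. The usual route is instead to take a finitely generated submodule $M\subseteq E$ with $\mathrm{ann}(M)=P$, observe that $M$ embeds (after a shift) into a finite direct sum of copies of $R/P$ by the FBN property, and deduce that $E$ is a summand of $E^g(R/P)$, hence isomorphic to $E_P$ up to shift.

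The converse direction you describe is, as you acknowledge, only an outline of an intended contradiction, and the key construction (producing two non-shift-equivalent indecomposable injectives with the same assassinator from a failure of graded boundedness) is not carried out. This is the genuinely hard half, and without it the proof is incomplete. In short: the well-definedness of the map and the general shape of the argument are right, but surjectivity as written is wrong, injectivity is under-justified at the crucial annihilator step, and the reverse implication is missing.
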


\begin{remark}\label{bij}
In the context of this proposition, the indecomposable injective associated with a homogeneous prime $P$ is the unique (up to isomorphism and shifts) indecomposable direct summand of $E^g(R/P)$ (\cite{NVO}, Theorem C.I.3.2), thus establishing an inverse map.
\end{remark}

This result brings us closer to the desired cogenerating set. Its significance in our context comes from the work of Matlis on the decomposition of injective modules over noetherian rings. Matlis proved that $R$ is (right) noetherian if and only if every injective (right) module is the direct sum of indecomposable injective (right) modules (\cite{Matlis}). This shows in particular that the set of indecomposable injective objects cogenerates the category of modules over a noetherian ring. There is a graded analogue of this result, as follows.

\begin{proposition}[Samir Mahmoud, \cite{SM}]\label{dec}
Let $M$ be a finitely generated graded module over a graded noetherian ring $R$. Then $E^g(M)$ is a finite direct sum of indecomposable injective objects in $Gr(R)$.
\end{proposition}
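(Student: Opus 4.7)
The plan is to transplant Matlis's classical proof of injective decomposition for noetherian rings to the graded setting, and then extract finiteness from the finite generation of $M$.

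First I would establish the graded analogue of Bass's theorem: over a graded noetherian ring $R$, any direct sum of graded-injective modules is again graded-injective. The proof uses the graded Baer criterion (a graded module $E$ is graded-injective iff every grading-preserving homomorphism from a homogeneous right ideal of $R$ extends to $R$), combined with the observation that any graded map from a noetherian graded module $J$ into $\bigoplus_i E_i$ factors through a finite sub-sum $\bigoplus_{i \in I_0} E_i$ (the images of finitely many homogeneous generators of $J$ involve only finitely many components), where the map then extends by injectivity of each factor.

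The second ingredient is the existence of uniform graded submodules: every nonzero finitely generated graded $R$-module contains a uniform graded submodule. This follows by iteration: if a nonzero finitely generated graded module $N_0$ had no uniform graded submodule, then inside any such $N_k$ one could split off two independent nonzero graded submodules $A_{k+1}, N_{k+1}$, producing an infinite strictly ascending chain $A_1 \subsetneq A_1 \oplus A_2 \subsetneq \cdots$, contradicting that $N_0$ is graded noetherian. The graded injective envelope $E^g(U)$ of a uniform graded submodule $U$ is indecomposable, since any nontrivial decomposition $E^g(U) = A \oplus B$ would intersect $U$ in two independent nonzero graded submodules.

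With these tools in place, I run the standard Matlis argument. By Zorn's lemma, pick a maximal family $\{E_i\}_{i\in I}$ of nonzero indecomposable graded-injective submodules of $E^g(M)$ whose sum is direct in $E^g(M)$. By the graded Bass theorem, $E := \bigoplus_{i\in I} E_i$ is graded-injective, hence splits off as a graded direct summand: $E^g(M) = E \oplus F$. Were $F \neq 0$, I would pick a homogeneous $0 \neq x \in F$; then $xR$ is a nonzero finitely generated graded submodule of $F$, and the uniform-submodule lemma yields a uniform graded $U \subseteq xR \subseteq F$ whose graded-injective envelope inside $F$ is an indecomposable graded-injective summand of $F$, contradicting maximality. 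So $E^g(M) = \bigoplus_{i \in I} E_i$. To finish, $M$ is an essential graded submodule of $E^g(M)$, so each $E_i$ meets $M$ in a nonzero graded submodule, and the family $\{E_i \cap M\}_{i \in I}$ is independent inside $M$. Since $M$ is finitely generated over graded noetherian $R$, $M$ is itself graded noetherian, forcing $I$ to be finite. The main obstacle I expect is the uniform-submodule lemma: carrying out the iterative splitting entirely with homogeneous submodules, and confirming indecomposability of the graded injective envelope of a uniform graded module in the graded (rather than ungraded) category. Once that lemma is secured, the surrounding Matlis-style argument runs without further incident.
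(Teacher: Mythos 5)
The paper does not prove this proposition; it is quoted from Samir Mahmoud \cite{SM} and used as a black box, so there is no in-paper argument to compare against. Your proof is a correct graded transplantation of Matlis's decomposition theorem, and all the graded-specific points are handled appropriately. The graded Baer criterion does hold in $Gr(R)$ (the standard proof requires applying the criterion to shifts $E(d)$, since the degree of the extension element obtained in the Zorn step need not be zero -- you state the criterion correctly, and the use of it to prove the graded Bass theorem is fine because $R$ graded noetherian forces every graded right ideal to be generated by finitely many homogeneous elements). Your uniform-submodule lemma is carried out entirely with graded submodules, and graded-uniformity is exactly what is needed to conclude indecomposability of the graded injective envelope, since $U$ is graded-essential in $E^g(U)$ and a nontrivial graded decomposition of $E^g(U)$ would intersect $U$ in two independent nonzero graded submodules. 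The Zorn/Bass step correctly produces $E^g(M) = \bigoplus_{i\in I} E_i$, and the finiteness of $I$ follows, as you say, from essentiality of $M$ in $E^g(M)$ and the fact that a noetherian graded module has finite (graded) Goldie dimension. One could shorten the argument by observing that finite generation of $M$ forces finite Goldie dimension from the outset, so one can pick finitely many graded-uniform $U_1,\dots,U_n\subseteq M$ with $\bigoplus U_j$ graded-essential in $M$ and take $E^g(M)=\bigoplus_j E^g(U_j)$ directly, avoiding Zorn and the graded Bass theorem entirely; but your longer route is also correct and is the one closer to Matlis's original exposition.
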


These two results yield a useful set of torsion classes parametrised by the homogeneous prime ideals of $R$. Indeed, for an indecomposable injective $E$, denote the corresponding torsion class cogenerated by $E$ in $Gr(R)$ by $\mathcal{T}_E$, i.e.,
\begin{equation}\nonumber
\mathcal{T}_E:=\left\{M\in Gr(R):\ Hom_{Gr(R)}(M, E)=0\right\}.
\end{equation} 
Let $\widehat{Y}$ denote the set of all such classes where $E$ runs over indecomposable injective objects, up to isomorphism and graded shift, such that its assassinator ideal is not the irrelevant ideal, i.e., $Ass(E)\neq R_+$. 
Analogously define $Y$ to be the set of the torsion classes in $Tails(R)$ of the form 
$$\mathcal{T}_{\pi E}:=\left\{K\in Tails(R):\ Hom_{Tails(R)}(K, \pi E)=0\right\}$$  for all $E$ indecomposable injective graded module with non-irrelevant assassinator.

\begin{corollary}\label{int torsion cl}
Let $R$ be a positively graded fully bounded connected noetherian $\mathbb{K}$-algebra generated in degree 1. Then the intersection (in $Tails(R)$) of the torsion classes in $Y$ is zero.
\end{corollary}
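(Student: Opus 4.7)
The plan is to adapt the cogenerating argument of the corollary in Section 4 to the fully bounded noetherian setting, adding one further step: passing from the graded injective envelope of a cyclic module to a single indecomposable summand via Proposition \ref{dec}. Concretely, for a given nonzero $M \in Tails(R)$, I will exhibit an indecomposable injective $E$ in $Gr(R)$ with $Ass(E) \neq R_+$ and a nonzero morphism $\pi N \to \pi E$ in $Tails(R)$, so that $M = \pi N$ lies outside the torsion class $\mathcal{T}_{\pi E} \in Y$.

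Lift $M$ to a non-torsion $N \in Gr(R)$ and pick $m \in h(N)$ whose annihilator contains no $R_{\geq n}$, equivalently $\sqrt{Ann(m)} \neq R_+$. Choose $f \in R_1 \setminus \sqrt{Ann(m)}$ and, via Zorn applied to the (nonempty) collection of homogeneous ideals containing $Ann(m)$ and disjoint from the multiplicative set $\{f^n : n \geq 1\}$, produce a maximal element $P$. The standard noncommutative argument --- if homogeneous ideals $I$ and $J$ both strictly contain $P$, then $(I+P)(J+P) \subset IJ+P$ meets $\{f^n\}$, so $IJ \not\subset P$ --- shows that $P$ is a homogeneous prime ideal of $R$, and $f \in R_+ \setminus P$ forces $P \neq R_+$. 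The inclusion $Ann(m) \subset P$ yields a nonzero graded map $mR \cong R/Ann(m) \twoheadrightarrow R/P$ (up to a harmless shift); composed with the essential inclusion into $E^g(R/P) = \bigoplus_j E_j$, which is a finite direct sum of indecomposable injectives by Proposition \ref{dec}, this gives a nonzero $mR \to E_j$ for some $j$. Injectivity of $E_j$ extends it through $mR \hookrightarrow N$ to a nonzero $N \to E_j$. Each $E_j$ is torsion-free (since $R/P$ is torsion-free by Remark \ref{torsion-free}, and the torsion-free class of a hereditary torsion theory is closed under essential extensions and summands) and has $Ass(E_j) = P \neq R_+$, because the associated primes of an essential extension of $R/P$ reduce to $\{P\}$. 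Lemma \ref{hom tails 0} therefore produces a nonzero $\pi N \to \pi E_j$ in $Tails(R)$, completing the argument.

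The main obstacle is not conceptual but bookkeeping. Unlike the commutative Section 4 corollary, one cannot invoke the equivalence of strongly prime and gr-strongly prime at the element level to identify the output of Zorn as a genuine prime ideal, so the Zorn step must be phrased ideal-theoretically as above; and one has to justify $Ass(E_j) = P$ for every indecomposable summand, which is the standard graded Matlis-type statement available for fully bounded noetherian graded rings. Both points are routine given the setting assumed in this section, but they are the only places where the commutative argument does not transfer verbatim.
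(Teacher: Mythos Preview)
Your argument has a genuine gap at the Zorn step that goes beyond bookkeeping. In the noncommutative setting $Ann(m)$ is only a \emph{right} ideal. Your primality argument requires two-sided ideals (the containment $(I+P)(J+P)\subset IJ+P$ uses $IP\subset P$, which needs $P$ to be a left ideal), so you are tacitly applying Zorn to the family of two-sided homogeneous ideals containing $Ann(m)$ and avoiding every $f^n$. But that family need not be nonempty: the two-sided ideal $R\cdot Ann(m)$ generated by $Ann(m)$ may already contain some power $f^n$, since nothing rules out $f^n=\sum_i r_ia_i$ with $a_i\in Ann(m)$ and $r_i\in R$, even though $mf^n\neq 0$ for all $n$. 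Without nonemptiness you lose the inclusion $Ann(m)\subset P$, and hence the surjection $mR\cong R/Ann(m)\twoheadrightarrow R/P$ on which the rest of your construction depends. This is exactly the point at which the Section~4 argument fails to transfer, and it cannot be repaired merely by rephrasing elementwise primality ideal-theoretically.

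The paper's proof avoids the problem by never attempting to trap a right annihilator inside a two-sided prime. It argues contrapositively: if $\pi M$ lies in every class of $Y$, then Lemma~\ref{hom tails 0} forces the lift $M$ to lie in every class of $\widehat{Y}$. Decomposing $E^g(M)$ into indecomposable injectives (Proposition~\ref{dec} together with the graded Matlis statement preceding it), each summand receives a nonzero map from $M$ and must therefore have assassinator $R_+$. But the indecomposable injective with assassinator $R_+$ is a summand of $E^g(R/R_+)$, hence torsion, so $\pi E^g(M)=0$ and thus $\pi M=0$. Here the fully bounded hypothesis enters only through the bijection between indecomposable graded injectives and homogeneous primes, not through any Zorn construction over annihilators.
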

\begin{proof}
Suppose $\pi M$ lies in the intersection of the torsion classes of $Y$. Then by lemma \ref{hom tails 0}, $M$ lies in the intersection of the torsion classes in $\hat{Y}$. By propostion \ref{dec}, the indecomposable injective objects cogenerate $Gr(R)$ and thus $E^g(M)$ must be a finite direct sum of the indecomposable injective associated with $R_+$. This indecomposable is a direct summand of $E^g(R/R_+)$ (see remark \ref{bij}), whose projection in $Tails(R)$ is therefore zero. Thus $\pi E^g(M)=0$ and so $\pi M=0$.

\end{proof}

We proceed now to the desired construction. Let $R$ be a  positively graded Artin-Schelter regular algebra of dimension 3 generated in degree one which is finitely generated over its centre. As discussed before, it is graded fully bounded noetherian. We need to define a perversity in $Y$, where $Y$ is as before.

\begin{definition}
A perversity is a map $p: Y\longrightarrow \mathbb{Z}$ such that, given $\mathcal{T}_{\pi E_1}$, $\mathcal{T}_{\pi E_2}$ in $Y$, if there is a nonzero homomorphism from $\pi E_2$ to $\pi E_1$ then
\begin{equation}\nonumber
p(\mathcal{T}_{\pi E_1})-(GKdim(R/Ass(E_2))-GKdim(R/Ass(E_1))) \leq p(\mathcal{T}_{\pi E_2}) \leq p(\mathcal{T}_{\pi E_1}).
\end{equation}
\end{definition}

We now prove that this definition of perversity coincides, when the algebra is commutative, with the definition of perversity of the introduction. We start by a supporting lemma.

\begin{lemma}\label{pts vs injs}
Let $R$ be a positively graded commutative noetherian $\mathbb{K}$-algebra and $X=Proj(R)$. The following are equivalent.
\begin{enumerate}
\item For $x_1,x_2\in X^{top}$, $x_1\in \bar{x}_2$;
\item $P_2:=Ann(x_2)\subset Ann(x_1)=:P_1$, where $Ann(x_i)$ denotes the homogeneous ideal of functions vanishing in $x_i$;
\item There is a nonzero homomorphism from $R/P_2$ to $R/P_1$;
\item There is a nonzero homomorphism from $E^g(R/P_2)$ to $E^g(R/P_1)$.
\end{enumerate}
\end{lemma}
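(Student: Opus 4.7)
The plan is to verify the cycle $(1) \Leftrightarrow (2) \Leftrightarrow (3) \Rightarrow (4) \Rightarrow (2)$, with the last implication being the main hurdle since it relies on graded Matlis-type information about the injective envelopes $E^g(R/P_i)$.

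The equivalence $(1) \Leftrightarrow (2)$ is the standard description of closure in $\mathrm{Proj}(R)$: the closure of the generic point $x_2$ is the vanishing locus of the homogeneous prime $P_2$, so $x_1 \in \overline{x_2}$ precisely when $P_2 \subseteq P_1$. For $(2) \Rightarrow (3)$ I would exhibit the canonical graded surjection $R/P_2 \twoheadrightarrow R/P_1$. For $(3) \Rightarrow (2)$, observe that $R/P_2$ is cyclic on $\bar{1}$, so a nonzero graded homomorphism $f : R/P_2 \to R/P_1$ (possibly of some nonzero shift) must satisfy $f(\bar{1}) = \bar{a}$ with $a \notin P_1$; for any homogeneous $p \in P_2$ we then get $\overline{pa} = f(p\bar{1}) = 0$ in $R/P_1$, and primality of $P_1$ forces $p \in P_1$, so $P_2 \subseteq P_1$ since $P_2$ is generated by its homogeneous elements.

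The implication $(3) \Rightarrow (4)$ is a one-line use of injectivity: compose a nonzero $f : R/P_2 \to R/P_1$ with $R/P_1 \hookrightarrow E^g(R/P_1)$ and extend along the inclusion $R/P_2 \hookrightarrow E^g(R/P_2)$ using injectivity of $E^g(R/P_1)$ to obtain a nonzero graded homomorphism $E^g(R/P_2) \to E^g(R/P_1)$.

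The main obstacle is $(4) \Rightarrow (2)$. My plan is to exploit two classical graded Matlis facts about $E^g(R/P)$ over a commutative Noetherian graded ring: (i) $R/P \subseteq E^g(R/P)$ is graded-essential, and (ii) every element of $E^g(R/P)$ is annihilated by some power of $P$. Together these show that every nonzero homogeneous $v \in E^g(R/P)$ satisfies $\sqrt{Ann(v)} = P$: essentiality produces a homogeneous $s$ with $0 \neq sv \in R/P$, whence $Ann(v) \subseteq Ann(sv) = (P : \overline{sv}) = P$ by primality of $P$, while (ii) gives $P \subseteq \sqrt{Ann(v)}$. Now if $g : E^g(R/P_2) \to E^g(R/P_1)$ is a nonzero graded homomorphism, choose a homogeneous $v$ with $g(v) \neq 0$; from $Ann(v) \subseteq Ann(g(v))$, taking radicals yields $P_2 = \sqrt{Ann(v)} \subseteq \sqrt{Ann(g(v))} = P_1$, which is (2) and closes the cycle.
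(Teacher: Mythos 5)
Your proposal is correct, and while the overall structure (dispatching $(1)\Leftrightarrow(2)\Rightarrow(3)\Rightarrow(4)$ quickly and concentrating on $(4)\Rightarrow(2)$) matches the paper, your argument for $(4)\Rightarrow(2)$ is genuinely different and in fact cleaner. The paper argues by contradiction: taking a hypothetical $a\in h(P_2)\setminus P_1$, it intersects $\mathrm{im}(f)$ with $R/P_1$ via graded essentiality, then intersects again with the nonzero ideal $(a+P_1)R$ of the domain $R/P_1$ to produce $0\neq b=ar+P_1=f(y)$, and then claims $f(ya)=0$ on the grounds that ``$P_2$ annihilates $E^g(R/P_2)$.'' That last claim is not true as stated (only $R/P_2$ itself is killed by $P_2$, not its injective envelope); the paper's argument is repaired by intersecting with $f(R/P_2)$ rather than all of $\mathrm{im}(f)$, so that $y$ can be taken in $R/P_2$, where indeed $ya=0$. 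You sidestep this issue entirely by isolating the reusable fact that every nonzero homogeneous $v\in E^g(R/P)$ has $\sqrt{\mathrm{Ann}(v)}=P$, with one inclusion coming from graded essentiality (as in the paper) and the other from the graded Matlis fact that $\mathrm{Ass}\,E^g(R/P)=\{P\}$, i.e.\ $E^g(R/P)$ is $P$-power torsion; then $\mathrm{Ann}(v)\subseteq\mathrm{Ann}(g(v))$ and taking radicals gives $P_2\subseteq P_1$ in one line. Your route leans more heavily on graded Matlis theory but yields a shorter, more transparent argument and avoids the essentiality-of-ideals-in-a-domain step; the paper's route is more elementary in principle but, as written, needs the small correction above. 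Your separate proof of $(3)\Rightarrow(2)$ is fine, though redundant once $(4)\Rightarrow(2)$ is in place.
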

\begin{proof}
It is clear that $(1)\Leftrightarrow (2) \Rightarrow (3) \Rightarrow (4)$. 
We only need to prove $(4)\Rightarrow (2)$. Let $f$ be a homomorphism from $E^g(R/P_2)$ to $E^g(R/P_1)$ and $a\in h(P_2)\setminus h(P_1)$. Clearly $N:=R/P_1\cap im(f)\neq 0$ since $R/P_1$ is a graded essential submodule of $E^g(R/P_1)$. Now, $N\cap (a+P_1)R\neq 0$ since any homogeneous ideal of a commutative graded domain is graded essential (the product of two nonzero ideals is nonzero and it is contained in the intersection). Hence, $0\neq(a+P_1)R\cap N \subset (a+P_1)R\cap im(f)$. Let then $b$ be a nonzero element in $(a+P_1)R\cap im(f)$ and $y\in E^g(R/P_2)$ such that $b=ar+P_1 = f(y)$. Note that $ya\in P_2E^g(R/P_2)$ and $P_2$ annihilates $E^g(R/P_2)$, thus $0=f(ya)=a^2r+P_1$ and $r\in P_1$, since $P_1$ is prime. Hence $b=0$ in $R/P_1$, reaching a contradiction and proving the result.
\end{proof}

\begin{proposition}
If $R$ is a positively graded noetherian connected commutative $\mathbb{K}$-algebra generated in degree 1, the definition of perversity above is equivalent to the commutative definition of perversity in equation (\ref{def perv}).
\end{proposition}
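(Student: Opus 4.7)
The plan is to establish an explicit bijection $X^{top}\leftrightarrow Y$ under which the hypothesis and conclusion of the two perversity conditions match line by line. Since any commutative noetherian ring is graded fully bounded, the proposition cited before remark \ref{bij} gives a bijection between homogeneous prime ideals of $R$ and indecomposable injective objects in $Gr(R)$ (up to isomorphism and graded shift). Excluding the irrelevant ideal, the primes $I_x$ for $x\in X^{top}$ correspond to the indecomposable injectives $E^g(R/I_x)$ (indecomposable because $R/I_x$ is a commutative domain), and hence to the torsion classes $\mathcal{T}_{\pi E^g(R/I_x)}\in Y$. A perversity $p:Y\to\mathbb{Z}$ is thereby transported to a map $\tilde p:X^{top}\to\mathbb{Z}$ by $\tilde p(x):=p(\mathcal{T}_{\pi E^g(R/I_x)})$.

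The next step is to translate the hypothesis ``there is a nonzero morphism $\pi E_2\to\pi E_1$ in $Tails(R)$'' into the geometric condition ``$x_1\in\overline{\{x_2\}}$''. The lemma immediately preceding the statement already proves that $x_1\in\overline{\{x_2\}}$ is equivalent to $\mathrm{Hom}_{Gr(R)}(E^g(R/I_{x_2}),E^g(R/I_{x_1}))\neq 0$. By remark \ref{torsion-free}, $E^g(R/I_{x_1})$ is torsion-free and injective, so lemma \ref{hom tails 0} identifies this with the nonvanishing of the corresponding Hom in $Tails(R)$.

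Finally, I would identify the numerical data. Since $R$ is a commutative finitely generated graded $\mathbb{K}$-algebra, the Gelfand--Kirillov dimension of $R/I_x$ coincides with its Krull dimension, which in turn equals $\dim(x)+1$ under the convention used in the introduction. Therefore
\[
GKdim(R/Ass(E_2))-GKdim(R/Ass(E_1))=\dim(x_2)-\dim(x_1),
\]
the shift by $+1$ cancelling in the difference. Writing $y=x_1$ and $x=x_2$ (so the hypothesis reads $y\in\overline{\{x\}}$), the double inequality of the new definition decomposes as $\tilde p(x)\leq\tilde p(y)$ and $\tilde p(x)\geq\tilde p(y)-(\dim(x)-\dim(y))$, which are precisely the monotone and comonotone conditions of the introduction; both implications follow at once since the implication runs in both directions. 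The main obstacle is ensuring that the bijection $X^{top}\leftrightarrow Y$ is genuinely well-defined despite the ``up to graded shift'' ambiguity in indecomposable representatives, and that GK dimension is cleanly identified with geometric dimension so that only the difference enters. Once these identifications are in place, the verification is a direct rearrangement of inequalities.
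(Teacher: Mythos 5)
Your proposal follows the paper's own proof essentially step-by-step: the bijection between $X^{top}$ and the torsion classes in $Y$ via homogeneous primes and indecomposable injectives, the translation of the closure condition $x_1\in\overline{\{x_2\}}$ through the preceding lemma together with Lemma~\ref{hom tails 0}, and the identification of Gelfand--Kirillov dimension with Krull dimension of $R/I_x$. The one refinement you add --- noting explicitly that the Krull dimension of $R/I_x$ is $\dim(x)+1$, so the shift cancels in the difference --- is a welcome clarification, since the paper's parenthetical ``the Krull dimension of $R/P_i$ \dots is the same as $\dim(x_i)$'' is off by one, though harmlessly so because only the difference enters the perversity inequality.
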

\begin{proof}
Let $X$ be the projective scheme associated with $R$. Note that points $x\in X^{top}$ are in bijection with homogeneous prime ideals not equal to the irrelevant ideal of $R$ and these are in bijection with graded torsion-free indecomposable injectives in $Gr(R)$ (up to isomorphisms and shifts). Suppose $x_1, x_2\in X^{top}$, $P_1, P_2$ the associated homogeneous prime ideals and $E_1, E_2$ the corresponding indecomposable injectives. The condition $x_1\in \bar{x}_2$ translates into the existence of a nonzero map from $E_2$ to $E_1$ by lemma \ref{pts vs injs} (note that, in this case, $E_i=E^g(R/P_i)$ since $R/P$ is indecomposable in $Gr(R)$ and hence so is its injective envelope) and by lemma \ref{hom tails 0} this is equivalent to the existence of a map from $\pi E_2$ to $\pi E_1$.

Since $R$ is finitely generated over $\mathbb{K}$ (as it is noetherian), and hence are all its quotients, it is known the Krull dimension of $R/P_i$ (which is the same as $dim(x_i)$ in the geometric definition of perversity - see introduction) coincides with the Gelfand-Kirillov dimension of $R/P_i$ (\cite{KL}, Theorem 4.5).  
The result then follows by making the adequate substitutions in equation (\ref{def perv}).
\end{proof}

Recall that 3-dimensional Artin-Schelter regular algebras are noetherian domains - in particular, they are prime rings (\cite{ATV}, \cite{ATV2}). This allows us to prove the following useful lemma.

\begin{lemma}
Let $R$ be a positively graded connected 3-dimensional Artin-Schelter regular algebra generated in degree 1 which is finitely generated over its centre. Then the image of a perversity $p$ as defined above is finite.  
\end{lemma}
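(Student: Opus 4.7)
The plan is to import the classical ``generic point'' argument into this setting: the zero ideal supplies a distinguished element of $Y$ that admits nontrivial morphisms to every other element, and comonotonicity then forces the image of $p$ into an interval whose length is controlled by $GKdim(R)=3$.

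First, since $R$ is a $3$-dimensional Artin-Schelter regular algebra it is a noetherian domain (see \cite{ATV}, \cite{ATV2}), so $P=0$ is a homogeneous prime distinct from $R_+$. The Nastasescu-Van Oystaeyen bijection then produces an indecomposable injective $E_0$ in $Gr(R)$ with $Ass(E_0)=0$, and hence an element $\mathcal{T}_{\pi E_0}\in Y$. Note that $GKdim(R/0)=GKdim(R)=3$, and that by Remark \ref{rem}, $R/P$ contains homogeneous elements of arbitrarily large degree for any homogeneous prime $P\neq R_+$, so $R/P$ is infinite-dimensional and hence $GKdim(R/P)\geq 1$.

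Next I would show that for every such $P$ there is a nonzero morphism from $\pi E_0$ to some graded shift of $\pi E_P$ in $Tails(R)$. Composing the surjection $R=R/0\twoheadrightarrow R/P$ with the embedding $R/P\hookrightarrow E^g(R/P)$ yields a nonzero graded map $R\to E^g(R/P)$, which by injectivity extends to a nonzero $\varphi: E^g(R)\to E^g(R/P)$. By Proposition \ref{dec}, both sides decompose as finite direct sums of indecomposable injectives; the assassinator correspondence (Remark \ref{bij}) forces every summand of $E^g(R)$ to be a shift of $E_0$ (because $R$ is a domain, $0$ is the only associated prime) and every summand of $E^g(R/P)$ to be a shift of $E_P$. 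Restricting $\varphi$ to a summand where it is nonzero and projecting to a summand of the target where the image is nonzero yields a nonzero graded map $E_0(n)\to E_P(m)$. Since $E_P$ is torsion-free by Remark \ref{torsion-free} and is injective, Lemma \ref{hom tails 0} promotes this to a nonzero morphism $\pi E_0(n)\to \pi E_P(m)$ in $Tails(R)$.

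Applying the perversity axiom with $E_2=E_0$ and $E_1$ the appropriate shift of $E_P$ (the shift affects neither the torsion class nor $GKdim(R/Ass(\cdot))$) then gives
\begin{equation}\nonumber
0\leq p(\mathcal{T}_{\pi E_P})-p(\mathcal{T}_{\pi E_0})\leq GKdim(R)-GKdim(R/P)\leq 3-1=2,
\end{equation}
so the image of $p$ lies in $\{p(\mathcal{T}_{\pi E_0}),\,p(\mathcal{T}_{\pi E_0})+1,\,p(\mathcal{T}_{\pi E_0})+2\}$ and is finite. The main obstacle, compared with the commutative case of the lemma preceding the proposition, is the construction of the nonzero morphism $\pi E_0\to\pi E_P$: in the noncommutative setting $R/P$ need not be indecomposable, so one cannot directly conclude $E^g(R/P)\cong E_P$, and it is precisely Proposition \ref{dec} together with the bijection between indecomposable injectives and homogeneous primes that bridges this gap.
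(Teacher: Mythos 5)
Your proof is correct and follows essentially the same route as the paper: produce the indecomposable injective $E_0$ attached to the zero ideal, use the canonical surjection $R\twoheadrightarrow R/P$ together with the Matlis-type decomposition (Proposition \ref{dec} and Remark \ref{bij}) to obtain a nonzero map $E_0\to E_P$, pass to $Tails(R)$ via Lemma \ref{hom tails 0}, and then invoke the perversity inequality with the finiteness of $GKdim(R)$. You are a bit more explicit than the paper about tracking graded shifts, about the restriction/projection step extracting $E_0(n)\to E_P(m)$ from $\varphi$, and about the lower bound $GKdim(R/P)\geq 1$ coming from Remark \ref{rem} (which sharpens the paper's ``differs by at most $GKdim(R)$'' to ``at most $GKdim(R)-1$''), but none of these changes the substance of the argument.
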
 
\begin{proof}
Since $R$ is prime, $(0)$ is a prime ideal not equal to the irrelevant ideal. Thus it corresponds to an indecomposable injective object which we denote by $E_0$. Furthermore, as a consequence of remark \ref{bij}, $E^g(R)$ is a finite direct sum of copies of $E_0$. Similarly, $E^g(R/P)$ is a finite direct sum of copies of $E_P$, the indecomposable injective object associated to the homogeneous prime ideal $P$. We observe that for any such $P$, there is a map from $E^g(R)$ to $E^g(R/P)$ induced by the canonical projection from $R$ to $R/P$. Therefore, there is a nontrivial map from $E_0$ to $E_P$. The perversity condition then assures that:  
\begin{equation}\nonumber
p(\mathcal{T}_{\pi E_P})-(GKdim(R)-GKdim(R/P)) \leq p(\mathcal{T}_{\pi E_0}) \leq p(\mathcal{T}_{\pi E_P}).
\end{equation}
Since, by definition, the Gelfand-Kirillov dimension of $R$ is finite (and so is the dimension of any of its quotients - \cite{KL}, Lemma 3.1) we have that, for a fixed value of $p(\mathcal{T}_{\pi E_0})$, 
$p(\mathcal{T}_{\pi E_P})$ is an integer that differs at most $GKdim(R)$ from it. Hence the image of $p$ is finite.
\end{proof}
Thus, for $R$ Artin-Schelter regular algebra of dimension 3 and finite over its centre, we can form a finite chain of hereditary torsion classes of $Tails(R)$, compactly generated in $\Dcal^b(Tails(R))$:
\begin{equation}\nonumber
S:=\left\{\Tcal_i:=\bigcap\limits_{T: p(T)\leq i}T, min(p)\leq i\leq max(p)\right\}.
\end{equation}
By corollary \ref{int torsion cl}, the last element of the chain, $\Tcal_{max(p)}$, is zero. Finally, section 3 provides a way of building a perverse quasi-coherent t-structure with respect to $p$ by defining its aisle to be $\Dcal^{S,\leq 0}$ in $\Dcal^b(Tails(R))$. As mentioned in remark \ref{conj restriction}, in light of section 4, we conjecture that these t-structures restrict to $\Dcal^b(tails(R))$.
\end{section}

\end{document}